%% file: article.tex
\documentclass[hidelinks,onefignum,onetabnum]{siamart251104}

\input{abbrev}
\input{shared}

\usepackage{nicefrac}

\newcommand{\reg}{\text{\scriptsize{reg}}}
\newcommand{\true}{\text{\scriptsize{true}}}
\newcommand{\LS}{\text{\scriptsize{LS}}}

\ifpdf
\hypersetup{
  pdftitle={Sketched FLSQR and FLSMR},
  pdfauthor={A. Bucci, S. Gazzola, and L. Robol}
}
\fi

\begin{document}

\maketitle

\begin{abstract}
LSQR and LSMR are iterative methods, based on the Golub–Kahan bidiagonalization algorithm, widely used for large-scale linear least squares problems. FLSQR and FLSMR are flexible variants of LSQR and LSMR, respectively, based on a flexible Golub-Kahan (Arnoldi-like) factorization algorithm, which naturally allow modifications of the solution approximation subspace and/or handling inexact matrix-vector multiplications with the (transpose of the) coefficient matrix, thereby enabling to enforce prior information into the computed solution. The goal of this paper is to introduce sFLSQR and sFLSMR, i.e., sketched variants of FLSQR and FLSMR, respectively, where randomization becomes particularly effective, as it allows to recover short recurrences for the solution approximation. 
In particular, this paper explores applications to large-scale inverse problems, showing the ability of the new randomized solvers to alleviate computational bottlenecks while preserving reconstruction quality. A theoretical analysis of sFLSQR and sFLSMR is provided, and their performance is validated through numerical experiments.
\end{abstract}

\begin{keywords}
LSQR, LSMR, linear least-squares, Krylov subspace methods, randomized algorithms, linear inverse problems, low-rank reconstructions, unmatched transposes
\end{keywords}

\begin{MSCcodes}
15A29, 68W20, 65F10
\end{MSCcodes}

\section{Introduction}\label{sect:intro} 
In the past few decades, randomized numerical linear algebra has emerged as a powerful tool to perform many numerical linear algebra tasks more efficiently and with strong theoretical guarantees, affecting many other scientific computing tasks and applications of computational mathematics; see \cite{hmt, martinsson2020randomized} and references therein. The focus of this work is the interplay of large scale least squares problems, (flexible) Krylov methods, and randomized sketching, with applications to large-scale linear inverse problems in imaging. To the best of our knowledge, randomized Krylov methods for regularizing linear inverse problems have only been considered very recently in \cite{chung2025randomized, sabate2025randomized, sabate2025randomizednew}.

\subsection{Solving linear inverse problems}
Linear discrete inverse problems typically read as 
\begin{equation}
\label{eq:inverseproblem}
\bfb = \bfA \bfx_\true + \bfe\,,
\end{equation}
where $\bfb\in \bbR^m$ collects observed measurements, $\bfA \in \bbR^{m\times n}$ represents a discretized forward model, $\bfx_\true \in \bbR^n$ is an unknown quantity of interest, 
and $\bfe\in \bbR^m$ contains unknown noise or errors in the data.  Throughout this work, 
we assume that $m$ and $n$ are both large, and that $\bfA$ is full rank (but typically ill-conditioned and with singular values rapidly decaying to zero). We also assume that $\bfe$ is a realization of a white Gaussian noise vector. Given $\bfb$ and $\bfA,$ the goal of inverse problems is to approximate $\bfx_\true$, immediately translating into the task of solving a linear least squares problem of the form
\begin{equation}
\label{eq:LS}
\min_{\bfx\in\bbR^n} \| \bfA \bfx - \bfb \|_2\,.
\end{equation}
The minimizer of \eqref{eq:LS} can be analytically expressed as $\bfx_\LS = \bfA^\dagger \bfb = 
   \bfx_\true + \bfA^\dagger \bfe$. 
Because of the large 
condition number $\kappa_2(\bfA) := \sigma_1(\bfA) / \sigma_n(\bfA)$ of $\bfA$, given in terms of the ratio between the largest and smallest singular values of $\bfA$,
\[
\| \bfx_\true - \bfx_\LS \|_2 = \| \bfA^\dagger \bfe \|_2 \gg \| \bfx_\true \|_2,
\]
which makes computing
$\bfx_\LS$ often useless in practice. 

To mitigate the contamination introduced in $\bfx_\LS$ by the so-called ``inverted noise'' $\bfA^\dagger\bfe$, one typically resorts to regularization techniques, which replace the original ill-posed least squares problem \eqref{eq:LS} by a related one that is more robust with respect to perturbations in the data. This is ideally done by encoding some available information about $\bfx_\true$ within the new problem formulation, leading to various regularization techniques. 
Since, in general, in this paper we consider a large-scale and unstructured $\bfA$, the original problem \eqref{eq:LS} is solved by an iterative solver solely relying on 
matrix--vector products with $\bfA$ and $\bfA^{\!\top}$, such as fixed-point 
iterations or Krylov methods (including LSQR \cite{paige1982lsqr} and LSMR \cite{LSMR}). 
In this framework, a common approach to recover a regularized version $\bfx_\reg$ of $\bfx_\LS$ is to terminate the iterations of any such solvers early; see  \cite{hansen2010discrete}. This practice introduces an implicit form of regularization into \eqref{eq:LS}, as the early iterations primarily capture desirable 
information about $\bfx_\true$ from the dominant singular vectors, while the later iterations start converging to $\bfx_\LS$, recovering its unwanted noisy components. This behavior is usually referred to as ``semiconvergence''. Therefore, the number of iterations acts as a regularization parameter specifying the amount of regularization, and effective stopping criteria should act as regularization parameter choice strategies. Among the latter, assuming that an accurate estimate of the noise magnitude $\deltae =\|\bfe\|_2$ in the data is available, the popular discrepancy principle prescribes to select the regularization parameter such that $\|\bfA\bfx_\reg - \bfb\|_2\simeq \deltae$ that, for iterative methods, translates to stopping at the $k$th iteration with residual such that
\begin{equation}\label{eq: discrP}
    \|\bfA\bfx_k-\bfb\|_2\leq \eta\deltae,\quad\mbox{where $\eta>1$ (typically $\simeq 1$) is a safety factor}.
\end{equation}
The relative magnitude of the noise is referred to as noise level $\delta$, i.e., $\delta\!=\!\deltae/\|\bfA\bfx_\true\|_2$.

\subsection{Flexible Krylov methods to enforce additional  regularization}
Flexible Krylov methods (including FLSQR and FLSMR \cite{chung2019}) can be briefly described as Krylov methods 
where the matrix $\bfA$ and/or $\bfA^{\!\top}$ used to build the Krylov subspace can change 
at each step. They were originally introduced to deal with 
non-stationary preconditioning, but can be used to describe 
a variety of phenomena, such as inexact application of the 
operator $\bfA$ and/or $\bfA^{\!\top}$. When treating linear inverse problems, they naturally appear within the framework of variational regularization methods, whereby one penalizes the so-called fit-to-data term \eqref{eq:LS} by adding a regularization term of the form $\lambda R(\bfx)$, where $\lambda\geq 0$ is the so-called regularization parameter. For specific relevant choices of $R(\bfx)$, such as the $\ell_p$ (semi)norm of $\bfx$ ($0<p\leq 1$) or the nuclear norm, one can solve the resulting problem using efficient variations of the classical iteratively reweighted least squares algorithm, whereby the inverses of the weights used to approximate the (semi)norm at hand formally appear as variable preconditioners on the right of $\bfA$; see \cite{survey, gazzola-lowrank, FKSIRW} for more details.

In this work, we are interested in building upon the foundation
given in \cite{gazzola-lowrank} for image reconstruction tasks such as 
deblurring and inpainting (see also the motivating illustration below). Here, the vectors $\bfx$ and $\bfb$ are vectorized 
2D images, assumed square for simplicity: the former is the original one, and the latter is the 
blurred one, possibly also affected by missing data (treated as zero entries). A property that can often be found 
in images represented in matrix form 
is a decay in their singular values. 
%
To enforce the same behavior (low-rankness) in the solution, in \cite{gazzola-lowrank} the authors propose to 
modify the basis vectors (reshaped as 2D arrays with the same shape as the original
image) by applying a truncated SVD to project them onto 
a low-rank manifold. More specifically, if a vector $\bfc$ is the vectorization of a 2D square array $\bfC$, i.e., $\bfc=\vect(\bfC)$ and $\bfC=\vect^{-1}(\bfc)$, 
we define the rank-$r$ truncation operator
\begin{equation}\label{def:lowr-trc}
\tau_r(\bfc)=\vect(\bfC_r)=\vect(\bfU^{\bfC}_r\bfSigma^{\bfC}_r(\bfV^{\bfC}_r)^{\!\top}),
\end{equation}
where $\bfC_r$ denotes the rank-$r$ truncated SVD of $\bfC$ (defined with respect to the truncated singular vectors and values matrices $\bfU^{\bfC}_r,\, \bfSigma^{\bfC}_r,\,\bfV^{\bfC}_r$ of $\bfC$). 
This helps in recovering solutions with 
decaying singular values, since the final approximation is obtained 
by a linear combination with a few of such low-rank basis vectors.

Aside from deblurring and inpainting, flexible Krylov methods may be naturally applied to X-ray tomographic reconstruction problems. Indeed, in common software toolboxes for tomography, such as ASTRA \cite{astra}, different discretization schemes and different model approximations for $\bfA$ and $\bfA^{\!\top}$ are adopted to reduce the computational effort, resulting in an unmatched transpose $\bfA^\sharp\simeq\bfA^{\!\top}$. 
In this work we also consider the application of flexible Krylov methods for handling inexact matrix-vector multiplications with $\bfA^{\!\top}$, providing an alternative to other solvers devised and used for the same purpose in \cite{CT2,CT1}. 

There is, however, a considerable drawback in switching from Krylov methods for 
symmetric problems (implicitly applied to the normal equations associated to \eqref{eq:LS}) to their flexible variants: the 
short recurrence relations that are available in the former 
are lost in the latter. Hence, such flexible approaches come at an
increased computational cost and additional storage requirements. 
This observation inspired 
this work, and the investigation of 
randomized sketching to alleviate this issue. 

\paragraph{Motivating illustration}
We now show an example that demonstrates the potential
benefits of regularization by combining FLSQR with 
(in this case) low-rank truncation. We take the
\texttt{house} test image of size $512\times 512$ pixel, and blur it with a Gaussian PSF with variance approximately $0.25$. 
Then, we subsample it by dropping
some entries, and add $5\%$ of  
white Gaussian noise. 
We finally setup the recovery problem as a linear least squares problem \eqref{eq:inverseproblem}, where
the linear operator $\bfA$ is the blurring composed with subsampling. 
This is solved by 50 
iterations of LSQR and FLSQR, the latter equipped with low-rank truncation (as defined in \eqref{def:lowr-trc}, with $r=30$). 
\begin{figure}
    \centering
    \includegraphics[width=0.22\linewidth, trim = {860px 0 680px 0}, clip]{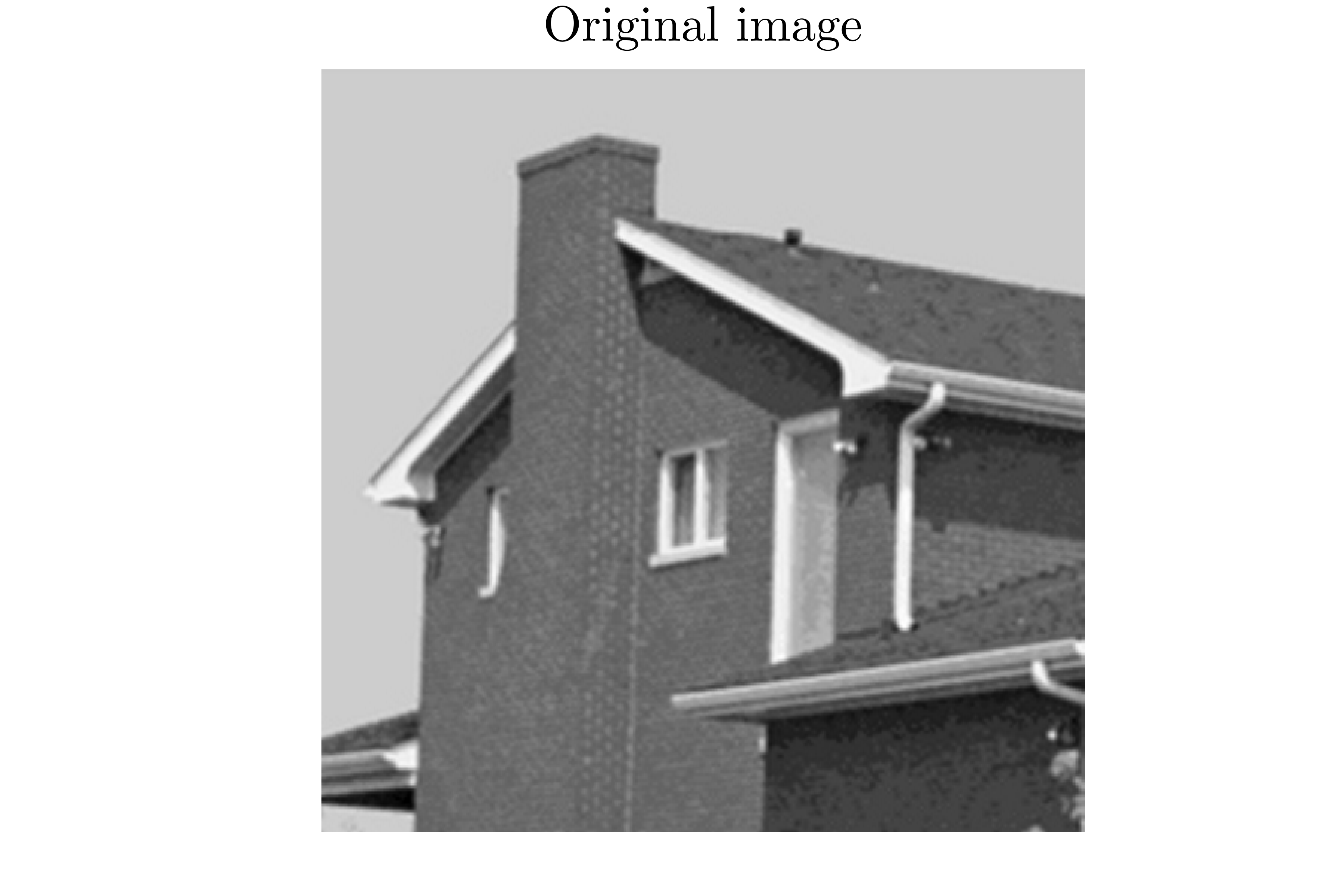}
    \includegraphics[width=0.22\linewidth, trim = {860px 0 680px 0}, clip]{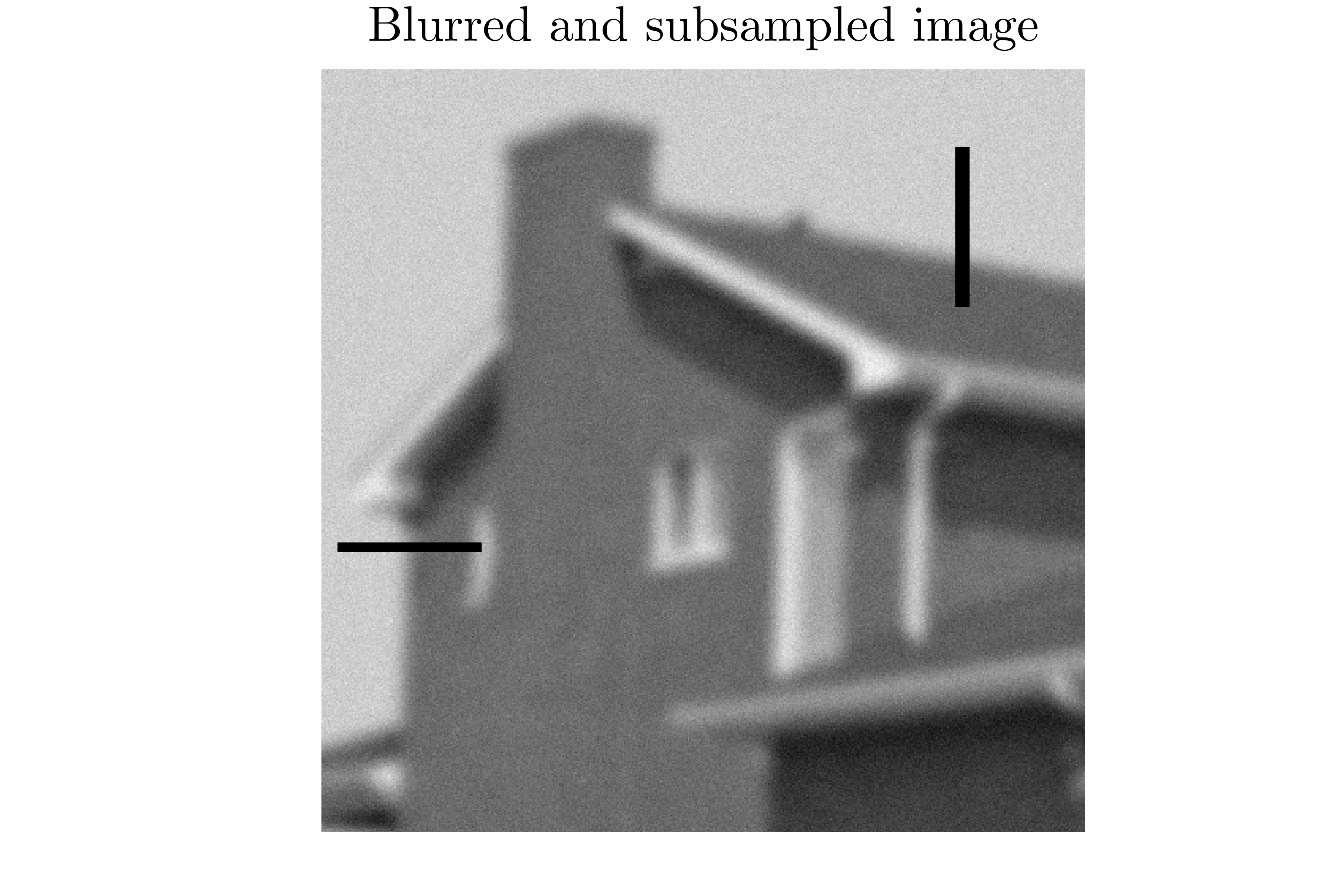}
    \includegraphics[width=0.22\linewidth, trim = {860px 0 680px 0}, clip]{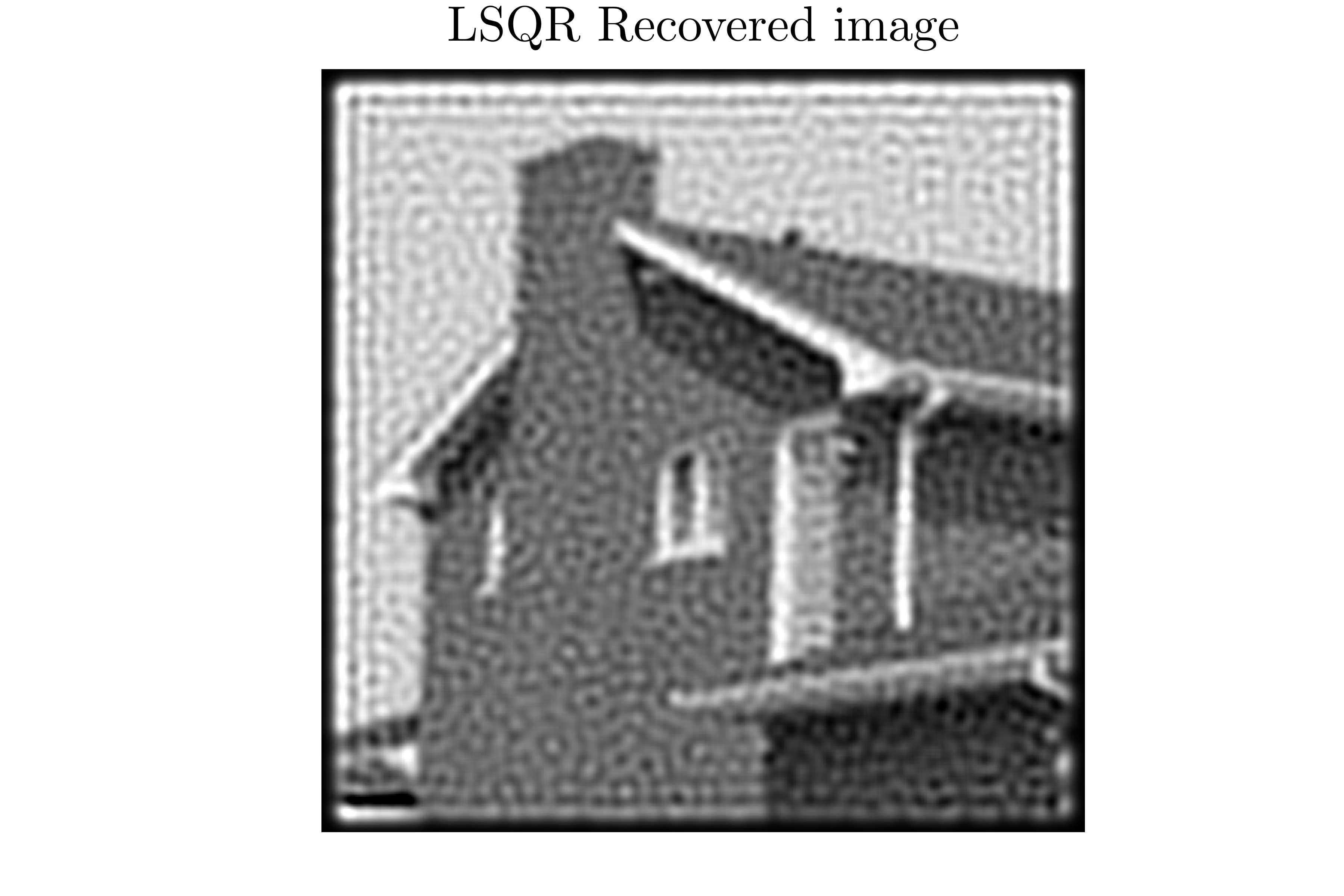}
    \includegraphics[width=0.22\linewidth, trim = {860px 0 680px 0}, clip]{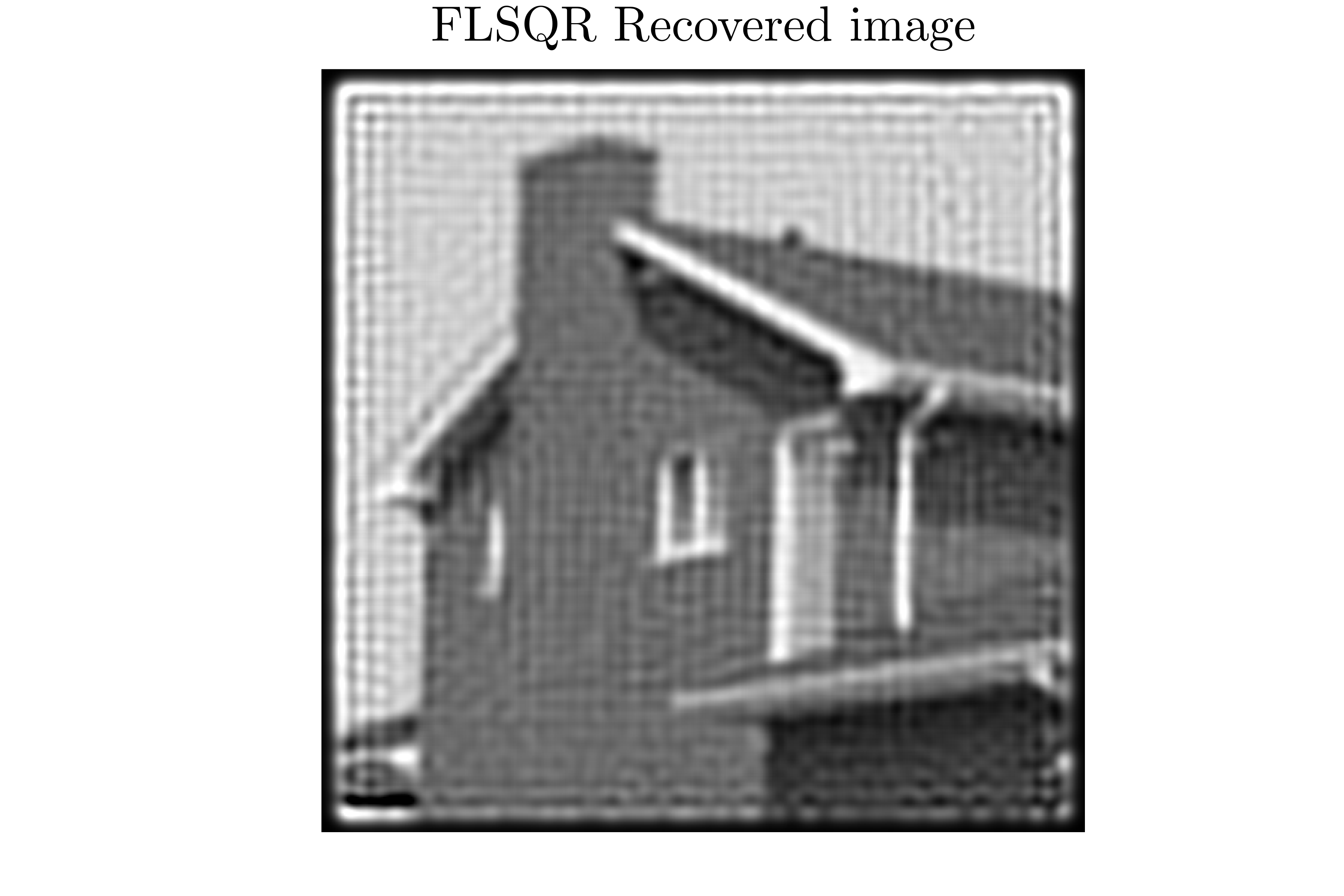}
    \caption{Deblurring and inpainting for the \texttt{house} test image, 
    contaminated with 5\% Gaussian  noise. From the left, we have: 
    the original image, the blurred and subsampled image, the recovery with 
    $31$ iterations of LSQR (which yields the minimum error result among the first $50$ iterations), the recovery with $50$ iterations of FLSQR with low-rank truncation at rank $30$ at each step. }
    \label{fig:example-intro}
\end{figure}
In Fig.~\ref{fig:example-intro} we show the original, corrupted, and 
recovered images. 
For both methods, we report the recovered image
which yields the lowest error among all 
iterations. This is attained 
at the $31$st LSQR iteration, 
and at the $50$th FLSQR iteration. 
The LSQR error is worse than FLSQR error, 
as clearly visible from the error plot in Fig.~\ref{fig:example-intro-plot} (right).
The low-rank truncation in FLSQR improves the accuracy of the reconstruction,
and avoids (or delays) 
semiconvergence. This can be seen again from the error plot in Fig.~\ref{fig:example-intro-plot} (right), where we compare the 
relative 2-norm residuals and errors in the two algorithms. 
%
%
%
\begin{figure}
    \centering
%
    \begin{tikzpicture}
    \begin{groupplot}[
        group style={
            group size=2 by 1,
            horizontal sep=1.5cm,
        }, 
        yticklabel style={
            /pgf/number format/fixed,
            /pgf/number format/precision=2,
        },
        tick label style={font=\small},
        label style={font=\small},
        title style={font=\small},
        width=6.8cm,
        height=5cm,
        xlabel={Iterations},
        grid=both,
        legend style={
            at={(0.95,0.95)},
            anchor=north east,
            legend columns=2,
        },
    ]
    \nextgroupplot[
        title={Relative Residual}
    ]
    \addplot+[very thick,
        blue!75!white, mark = *, mark size = 1.5pt, mark repeat = 4]
    table[
        x=iter_lsqr,
        y=res_lsqr,
        col sep=space
    ]{residuals_flsqr.dat};
    \addlegendentry{LSQR}
    \addplot+[very thick,
        red!75!white, mark = x, mark size = 1.5pt, mark repeat = 4]
    table[
        x=iter_flsqr,
        y=res_flsqr,
        col sep=space
    ]{residuals_flsqr.dat};
    \addlegendentry{FLSQR}
    \nextgroupplot[
        title={Relative Error},
    ]
    \addplot+[very thick, blue!75!white, mark = *, mark size = 1.5pt, mark repeat = 4]
    table[
        x=iter_lsqr,
        y=err_lsqr,
        col sep=space
    ]{errors_flsqr.dat};
    \addlegendentry{LSQR}
    \addplot+[very thick, red!75!white, mark = x, mark size = 1.5pt, mark repeat = 4]
    table[
        x=iter_flsqr,
        y=err_flsqr,
        col sep=space
    ]{errors_flsqr.dat};
    \addlegendentry{FLSQR}
    \end{groupplot}
    \end{tikzpicture}
    \caption{Deblurring and inpainting for the \texttt{house} test image. 2-norm residual and error plot, using LSQR and FLSQR (with low-rank-truncation as in Fig.~\ref{fig:example-intro}).}
    \label{fig:example-intro-plot}
\end{figure}
As expected, the residuals are small in both cases, whereas LSQR exhibits semiconvergence around iteration $30$, while FLSQR manages to achieve 
a smaller recovery error. These results are in line with what has already 
been observed in \cite{gazzola-lowrank}. However, the increased accuracy of the flexible method comes at a non-negligible cost: running LSQR requires around 27 seconds, whereas 
FLSQR takes 35 seconds. In this work, we show that the FLSQR cost can be 
lowered, as we can use sketching to run the more effective FLSQR 
at about the same cost
than the standard LSQR method. 

\subsection{Randomized sketching in least squares and inverse problems}
Motivated by recent developments in randomized numerical linear algebra \cite{hmt, mahoney2011randomized, woodruff2014sketching}, and in particular by sketching approaches tailored to large-scale iterative solvers \cite{balabanov2022randomized, nakatsukasa2024fast}, we explore randomized techniques for FLSQR and FLSMR, with the goal of preserving the typical increased reconstruction accuracy achieved by these methods with respect to their standard counterparts (as illustrated above), while avoiding
the computational drawback of having to perform costly full orthogonalization procedures for the basis vectors rather than short-recurrences. 

When considering Arnoldi-based solvers like GMRES for square $\bfA$, the authors of \cite{balabanov2022randomized} propose to perform the (full) orthogonalization on a sketched, low-dimensional version of the Krylov subspace basis, making it significantly cheaper. In a similar setting, the authors of \cite{nakatsukasa2024fast} show that, with appropriate randomization within the sketch-to-solve paradigm, a truncated orthogonalization process for the Krylov approximation subspace basis can still yield high-quality solutions. Our construction is more closely aligned with the last approach. The authors of \cite{chung2025randomized}, targeting inverse problems and  
general rectangular matrices $\bfA$, extend the sketched inner product approach of \cite{balabanov2022randomized} introducing a randomized Golub-Kahan factorization that involves (full) orthogonalization of the sketched, low-dimensional basis vectors. In the inverse problem setting, the authors of \cite{sabate2025randomizednew} apply randomized numerical linear algebra techniques to flexible Krylov solvers based on both the flexible Arnoldi and flexible Golub-Kahan factorizations. Even if \cite{sabate2025randomizednew} also promotes a sketch-to-solve
approach applied to a projected problem,  only minimal-residual methods are considered, so that FLSMR-like solves are not explored; also, strategies for overcoming costs in the storage of the basis vectors (needed for adaptively setting the regularization parameter in Tikhonov problems) are proposed. 

Another line of work that has recently been pursued to reduce the orthogonalization cost in the context of 
inverse problems relies on the changing minimal residual Hessenberg method (CMRH) \cite{sadok1999cmrh}; see \cite{brown2025inner, brown2025h, sabate2025randomized}.
Among these contributions, \cite{sabate2025randomized} is perhaps the closest to ours, as it proposes to use a project and sketch 
paradigm to compute solutions whose residual norms are very close to be optimal, 
as opposed to the ones associated to inner-product-free solvers such CMRH (for square $\bfA$) and LSLU (for general rectangular $\bfA$), which are only quasi-minimal.
Again, the approach in \cite{sabate2025randomized} is different from ours, as the basis for the Krylov subspaces are generated in a fundamentally different manner, and no LSMR-like methods are considered. 
As we will show in the following, (F)LSMR offers distinctive advantages over (F)LSQR when combined with sketching for large noise problems.

\subsection{Main contributions}

We introduce two new sketched flexible 
least squares solvers, 
\textit{sFLSQR} and \textit{sFLSMR}, that are randomized versions 
of the 
FLSQR
and the 
FLSMR
methods, respectively. 
The key idea is to rely on flexibility to 
incorporate any structure in the solution as a form 
of regularization, and at the same time increase 
computational efficiency by using randomized sketching techniques. 
Apart from deriving the new sFLSQR and sFLSMR solvers, the main contributions in the paper are the following:
\begin{description}
\item \textit{Studying the influence of the noise level on sFLSQR and sFLSMR.}
We show that sketching the residual minimization problem associated with FLSQR can lead to noticeable deviations in the residual norms when the latter remain non-negligible, 
a situation typical in inverse problems with moderate or large noise levels. We demonstrate that sFLSMR mitigates this 
issue. This leads to 
a practical rule of thumb for selecting the best sketched 
Krylov method: sFLSQR for low-noise levels, and 
sFLSMR when the noise is large. 

\item \textit{Analyzing the sketched residuals approximation errors.} 
We derive deterministic bounds that relate the residual norms produced by the sketched methods to the optimal residual attainable within the same approximation subspace. We provide a probabilistic bound in expectation for the sFLSQR residual when using Gaussian sketchings, that links the sketch size with the accuracy of the residual.

\item \textit{Using unmatched and approximated transposes.}
In our numerical experiments, we show how to 
exploit the new sketched approach to incorporate the use of 
unmatched transposes (for instance, the GPU implementations of backprojections for Computed Tomography (CT) scan 
image reconstruction problems) in Krylov methods. 
We demonstrate that this can be done while maintaining 
a solid theoretical framework, and without 
sacrificing performance. 
\end{description}

\section{Sketched flexible Krylov methods for least squares problems}
\label{sec:sketched-least-squares}
This section describes the sketched variants of FLSQR (sFLSQR) and FLSMR (sFLSMR) proposed in this work. In order to fully understand the derivations underlying the new methods, we first recall the construction of LSQR, LSMR (Section \ref{sect:LSQR_LSMR}), FLSQR and FLSMR (Section \ref{sec:flsqr}) from Golub-Kahan factorizations. The sFLSQR method is then introduced in Section \ref{sec:sflsqr}, while sFLSMR is introduced in Section \ref{sec:sflsmr}, after providing some insight in Section \ref{sec:residual-size}. An analysis of the errors in the residuals introduced by sketching is proposed in Section \ref{sec: bounds}, and some common guidelines for the choice of the sketching operator are recalled in Section \ref{sec:sketching}. 


\subsection{Golub-Kahan bidiagonalization, LSQR and LSMR}\label{sect:LSQR_LSMR}
$k$ steps of the Golub-Kahan bidiagonalization (GKB) algorithm with starting vector $\bfu_1=\nicefrac{\bfb}{\|\bfb\|_2}$ can be compactly written as the following partial matrix factorizations, 
\begin{equation}\label{eq:GKB}
\bfA^{\!\top} \bfU_{k} = \bfV_k \bfB_{k}^{\!\top},\qquad 
\bfA \bfV_k = \bfU_{k+1} \bfB_{k+1,k}\,,
\end{equation}
where $\bfV_k = [\bfv_1 \ldots \bfv_k]\in\bbR^{n\times k}$ and $\bfU_{k}=[\bfu_1 \ldots \bfu_k]\in\bbR^{m\times k}$ have orthogonal columns of unit 2-norm that span the Krylov subspaces $\mathcal K_k(\bfA^{\!\top}\bfA, \bfA^{\!\top} \bfb)$ and $\mathcal K_k(\bfA\bfA^{\!\top}, \bfb)$, respectively, and $\bfB_{k}\in\bbR^{k\times k}$ is lower bidiagonal, with 
$\bfB_{k+1,k}\in\bbR^{(k+1)\times k}$ obtained by removing the last column of $\bfB_{k+1}$. 
A number of Krylov subspace methods can be defined starting from the GKB: these include LSQR and LSMR. 

The classical LSQR method \cite{paige1982lsqr} is mathematically equivalent 
to running the conjugate gradient (CG) method on the normal equations 
$\bfA^{\!\top} \bfA \bfx = \bfA^{\!\top} \bfb$ associated to \eqref{eq:LS}. It is, however, more stable as it does not 
square the condition number of the problem, and is one of the main 
algorithms available for solving large scale linear least squares problems \eqref{eq:LS}. The approximate solution returned by $k$ iterations of LSQR is such that 
\begin{equation} \label{eq:xlsqr}
\begin{split}
  \bfx_k^{\mathrm{LSQR}} := \bfV_k\bfy_k^{\mathrm{LSQR}},\quad\text{where}\quad\bfy_k^{\mathrm{LSQR}}
  &=\arg\min_{\bfy\in\bbR^k} \| 
   \bfb- \bfA\bfV_k \bfy 
  \|_2 \\
  &=\arg\min_{\bfy\in\bbR^k} \| 
    \|\bfb\|_2\bfe_1 - \bfB_{k+1,k} \bfy
  \|_2.
\end{split}    
\end{equation}

The classical LSMR method \cite{LSMR} is mathematically equivalent to running MINRES on the normal equations $\bfA^{\!\top}\bfA\bfx = \bfA^{\!\top}\bfb$ associated to \eqref{eq:LS}. The approximate solution returned by $k$ iterations of LSMR is such that
\begin{equation} \label{eq:xlsmr}
\begin{split}
\bfx_k^{\mathrm{LSMR}} := \bfV_k\bfy_k^{\mathrm{LSMR}},\quad\mbox{where}\quad\bfy_k^{\mathrm{LSMR}}
&=\arg\min_{\bfy\in\bbR^k} \| 
    \bfA^{\!\top}(\bfb-\bfA\bfV_k \bfy)
  \|_2\\ 
  &=\arg\min_{\bfy\in\bbR^k} \| 
    \bfB_{k+1}^{\!\top}(\|\bfb\|_2 \bfe_1-\bfB_{k+1,k}\bfy) 
  \|_2.
\end{split}    
\end{equation}

Each GKB step, 
when implemented without reorthogonalization, only requires a matrix-vector (matvec) product with $\bfA$, a matvec product with $\bfA^{\!\top}$ and two scalar products (to compute 2-norms). Computing $\bfx_k^{\mathrm{LSQR}}$ and $\bfx_k^{\mathrm{LSMR}}$ efficiently (using smart QR factorization updates for $\bfB_{k+1,k}$ and $\bfB_{k+1}$) only requires the storage of 2 vectors of length $m$ and 3 or 4 vectors of length $n$, respectively.

\subsection{Flexible Golub-Kahan factorization, FLSQR and FLSMR}\label{sec:flsqr}
As remarked in the introduction, when solving large-scale linear inverse problems via an iterative regularizing method, one may wish to adaptively modify the basis vectors of the solution subspace by hard-wiring some information coming from prior assumptions on $\bfx_\true$, in order to improve the quality of the approximations so obtained. When doing so within the classical LSQR and LSMR solvers, the approximation subspace for the solution is not a standard Krylov subspace anymore. To formalize this, we assume to be given an operator $\tau: \mathbb R^n \to \mathbb R^n$ and, starting from $\bfp_1=\nicefrac{\bfA^{\!\top}\bfb}{\|\bfA^{\!\top}\bfb\|_2}$, we define 
the modified basis vectors $\bfz_1,\dots,\bfz_k$ as follows:
\begin{equation}\label{eq: modbasis}
  \bfZ_k := \begin{bmatrix}
  \\
    \bfz_1 & \ldots & \bfz_k \\ 
   \\ 
  \end{bmatrix} :=\begin{bmatrix}
  \\
      \tau(\bfp_1) & \ldots & \tau(\bfp_k) \\ 
   \\ 
  \end{bmatrix} =: \tau(\bfP_k), 
\end{equation}
where we commit a small abuse of notation for $\tau(\bfP_k)$, applying the operator column-wise. The matrices $\bfP_k$ and $\bfZ_k$ can be generated using a generalization of the GKB algorithm, called Flexible Golub-Kahan (FGK) factorization; see \cite{chung2019}. $k$ steps of the FGK factorization with starting vector $\bfw_1=\nicefrac{\bfb}{\|\bfb\|_2}$ can be compactly written as the following partial matrix factorizations,
\begin{equation}\label{eq:flsqr-relations}
      \bfA^{\!\top} \bfW_{k} = \bfP_{k} \bfT_{k},\qquad
      \bfA \bfZ_{k} = \bfW_{k+1} \bfH_{k+1,k}\,,
\end{equation}
where $\bfP_k\in\bbR^{n\times k}$ and $\bfW_k\in\bbR^{m\times k}$ have orthonormal columns $\bfp_i$ and $\bfw_i$, $i=1,\dots,k$ (and are in general different from $\bfV_k$ and $\bfU_k$ appearing in \eqref{eq:GKB}, although they play a similar role), $\bfZ_k$ is as 
in \eqref{eq: modbasis},  $\bfT_k \in \mathbb R^{k \times k}$ 
is upper triangular, and \linebreak[4]$\bfH_{k+1,k} \in \mathbb R^{(k+1) \times k}$ is upper Hessenberg. In the special case $\tau(\bfv) = \bfv$, we obtain 
the usual GKB, with $\bfT_k=\bfB_k^{\!\top}$, $\bfH_{k+1,k}=\bfB_{k+1,k}$, 
$\bfP_k = \bfV_k$ and $\bfW_k = \bfU_k$. We emphasise that the appearance of bidiagonal matrices (or, equivalently, the short term recurrence updates for the vectors $\bfv_i$ and $\bfu_i$) in GKB are a consequence of the underlying inner product with $\bfA^{\!\top} \bfA$; when perturbations 
are introduced by choosing $\tau$ different from the identity, then the matrices $\bfT_k$ and $\bfH_{k+1,k}$ both fill up with $\mathcal O(k^2)$ nonzero entries (representing the orthogonalization coefficients for generating the vectors $\bfp_i$ and $\bfw_i$). 

Similarly to the GKB case, one can build flexible versions of LSQR and LSMR starting from the FGK factorization. Specifically, the approximate solution $\bfx_k^{\mathrm{FLSQR}}$ computed at the $k$th iteration of the flexible LSQR (FLSQR) method is such that $\bfx_k^{\mathrm{FLSQR}}\in\mathrm{range}(\bfZ_k)$ and satisfies an optimality property analogous to the LSQR one, i.e.,
\begin{equation}\label{eq:FLSQR}
\begin{split}
   \bfx_k^{\mathrm{FLSQR}} := \bfZ_k\bfy_k^{\mathrm{FLSQR}},\quad\mbox{where}\quad\bfy_k^{\mathrm{FLSQR}}
   &=\arg\min_{\bfy\in\bbR^k} \| 
    \bfb - \bfA\bfZ_k \bfy
  \|_2\\
  &= \arg\min_{\bfy\in\bbR^k} \| \|\bfb\|_2 \bfe_1 -
    \bfH_{k+1,k} \bfy 
  \|_2.
\end{split}
\end{equation}
Hence, even in this case, thanks to the orthogonality of the columns of $\bfW_{k+1}$, $\bfx_k^{\mathrm{FLSQR}}$ can be efficiently recovered by solving a linear least squares problem of size $O(k)$. The approximate solution $\bfx_k^{\mathrm{FLSMR}}$ computed at the $k$th iteration of the flexible LSMR (FLSMR) method is such that $\bfx_k^{\mathrm{FLSMR}}\in\mathrm{range}(\bfZ_k)$ and satisfies an optimality property analogous to the LSMR one, i.e.,
\begin{align*}
   \bfx_k^{\mathrm{FLSMR}} := \bfZ_k\bfy_k^{\mathrm{FLSMR}}\quad\mbox{where}\quad\bfy_k^{\mathrm{FLSMR}}
   &=\arg\min_{\bfy\in\bbR^k} \| 
    \bfA^{\!\top}(\bfb - \bfA\bfZ_k \bfy)
  \|_2\\ 
  &= \arg\min_{\bfy\in\bbR^k} \| \bfT_{k+1}(\|\bfb\|_2 \bfe_1 -
    \bfH_{k+1,k} \bfy) 
  \|_2.
\end{align*}
Note that FLSMR is mathematically equivalent to running FGMRES \cite{Saad1993} to the normal equations $\bfA^{\!\top}\bfA\bfx = \bfA^{\!\top}\bfb$ associated to \eqref{eq:LS}. 

The $k$th FGK step, similarly to GKB, requires a 
matvec product with $\bfA$ and a matvec product with $\bfA\t$; differently from GKB, $2k$ scalar 
products are needed to orthonormalize the $k$th vector against the previous ones. 

\subsection{Sketched FLSQR}
\label{sec:sflsqr}
This section proposes a further modification to FLSQR to make it 
more practical and cheap to run, without impacting its accuracy. 
The resulting method is called \emph{sketched flexible LSQR} (sFLSQR), 
and can be summarized at a high level as follows:
\begin{itemize}
    \item A partial orthogonalization is performed at each step, providing 
      non-\linebreak[4]orthogonal versions $\modP_k$ and $\modW_k$ of 
      $\bfP_k$ and $\bfW_k$, respectively, as defined in Section~\ref{sec:flsqr}.
    \item The basis $\modZ_k$ is obtained as 
    $\modZ_k = \tau(\modP_k)$.\footnote{Note that, when $\tau(\bfv)$ is nonlinear, 
we generally have $\mathrm{range}(\mxZ_k^{(p)}) \neq \mathrm{range}(\mxZ_k)$.}
    \item The minimization problem $\min_{\bfy\in\bbR^k}\| \bfA \modZ_k \bfy - \bfb\|_2$ is solved 
      by a sketching procedure, instead of relying on the Golub-Kahan 
      relations.
\end{itemize}
We now provide a few more details on how to achieve this. First, let us 
observe that even if a partial reorthogonalization is performed at 
each step, we can still write the recurrence relations for a Golub-Kahan-like 
algorithm. 
Since the matrices involved are not the same of Section~\ref{sec:flsqr} and, in general, do not even span the same subspaces, we use a slightly different notation:
\begin{align}\label{eq: FGK}
     \bfA^{\!\top} \modW_{k} &= \modP_{k} \modT_{k}, &
     \bfA \modZ_{k} &= \modW_{k+1} \modH_{k+1,k},&\quad\mbox{with}\quad 
     \modZ_k &= \tau(\modP_k)\,. 
\end{align}
In this context, the matrices $\modT_k$ and $\modH_k$ are upper triangular and Hessenberg, 
but also banded. However the matrices $\modP_k$ and $\modW_k$ are not orthogonal; therefore, in general, for all $\bfy\in\bbR^k$,
\[
  \| \bfA \modZ_k \bfy - \bfb \|_2 = \| \modW_{k+1} \modH_{k+1,k} \bfy - \bfb \|_2 \neq 
    \| \modH_{k+1,k} \bfy - \| \bfb \|_2 \bfe_1 \|_2, 
\]
and there is no immediate way to compute the minimizer of such quantities 
at each step. Note that the residual $\bfA \modZ_k \bfy - \bfb$ belongs to the column span of $[ \bfA\modZ_k, \bfb ]$, 
which has dimension at most $k+1$. Hence, using a $(k+1)$-oblivious $\epsilon$-subspace 
embedding $\bfS\in \mathbb{R}^{s\times m}$ with $\epsilon < 1$, we can seek an approximate solution by minimizing the sketched norm $\| \bfS(\bfA \modZ_k \bfy - \bfb) \|_2$, which satisfies
\begin{equation}\label{sketchProp}
  (1 - \epsilon) \| \bfA \modZ_k \bfy - \bfb \|_2 \leq \| \bfS(\bfA \modZ_k \bfy - \bfb) \|_2 \leq (1 + \epsilon) \| \bfA \modZ_k \bfy - \bfb \|_2 
\end{equation}
for all $\bfy\in\bbR^k$.
We have that 
$\bfS \bfA \modZ_k = [\bfS \bfA \modZ_{k-1}, \bfS \bfA \bfz^{(p)}_k]$ so, after computing $\bfS\bfb$, the matrix $\bfS \bfA \modZ_k$ 
can be computed one column at a time throughout the 
iterations.
The approximate solution returned by $k$ iterations of sFLSQR is such that
\begin{equation}\label{eq:sFLSQR}
    \bfx_k^{\text{sFLSQR}}=\modZ_k \bfy_k^{\text{sFLSQR}},\quad\mbox{where}\quad
    \bfy_k^{\text{sFLSQR}}=\arg\min_{\bfy\in\bbR^k}\| \bfS(\bfA \modZ_k \bfy - \bfb)\|_2
\end{equation}
and satisfies
\begin{equation}\label{res_bound}
\begin{split}
  \| \bfA \modZ_k \bfy_k^{\text{sFLSQR}} - \bfb \|_2 &\leq 
  (1 - \epsilon)^{-1} \| \bfS (\bfA \modZ_k \bfy_k^{\text{sFLSQR}} - \bfb) \|_2 \\ 
  &\leq (1 - \epsilon)^{-1} \| \bfS\bfA \modZ_k \bfy - \bfS \bfb \|_2
  \leq \frac{1 + \epsilon}{1 - \epsilon}\| \bfA\modZ_k \bfy - \bfb \|_2,
  \end{split}
\end{equation}
for every $\bfy\in\bbR^k$. 
In particular, sFLSQR yields a residual that, 
up to a factor $C_\epsilon := \frac{1 + \epsilon}{1 - \epsilon}$, is as small as the optimal one attainable in $\textrm{range}(\modZ_k)$. 
We observe that this is in line with similar ideas recently proposed in 
\cite{chung2025randomized,sabate2025randomized} and in the context of 
Krylov methods with non-orthogonal bases \cite{brown2025inner}.

When the orthogonalization is limited to the last 
$\ell$ vectors, each step of sFLSQR requires: 
a matrix-vector (matvec) product with $\bfA$, a matvec product with 
$\bfA^{\!\top}$, $2\ell+2$ scalar products, and the sketching 
of a vector. The pseudocode 
for sFLSQR is reported in Algorithm~\ref{alg:sflsqr}.
\begin{algorithm}[h]
\caption{sFLSQR}
\label{alg:sflsqr}
\begin{algorithmic}[1]
\Require Matrix $\bfA$, right-hand-side $\bfb$, sketching matrix
$\bfS$, maximum iterations \texttt{maxit}, tolerance \texttt{tol}, orthogonalization window $\ell$, truncation operator $\tau$
\Ensure Approximate solution $\bfx_k$
\State $\beta = \|\bfb\|_2$, \quad $\bfw_1 = \bfb / \beta$
\State $\bfz = \bfA^{\!\top} \bfw_1$
\State $\bfs_{\bfb} = \bfS \bfb$
\State $\bfZ = [\ ]$, \quad $\bfW = [\bfw_1]$, \quad $\bfS_{\bfA\bfZ} = [\ ] $

\For{$k = 1,\dots,\texttt{maxit}$}

    \For{$j = \max(1,k-\ell),\dots,k-1$}
        \State $\bfz = \bfz - \langle \bfz_j,\bfz\rangle\bfz_j$
    \EndFor

    \State $\bfz = \bfz / \|\bfz\|_2$
    \State $\bfz_k = \tau(\bfz)$
    \State $\bfZ=[\bfZ, \bfz_k]$

    \State $\bfw = \bfA \bfz_k$
    \State $\bfS_{\bfA\bfZ} = [\bfS_{\bfA\bfZ}, \bfS \bfw]$

    \For{$j = \max(1,k-\ell),\dots,k$}
        \State $\bfw = \bfw - \langle \bfw_j,\bfw\rangle\bfw_j$
    \EndFor
    \State $\bfw_{k+1} = \bfw/\|\bfw\|_2$
    \State $\bfW = [\bfW, \bfw_{k+1}]$

    \State $\bfz = \bfA^{\!\top} \bfw_{k+1}$

    \State
    $
        \bfy_k
        =
        \arg\min_{\bfy}
        \|
        \bfs_{\bfb} - \bfS_{\bfA \bfZ}\bfy
        \|_2
    $

    \If{$\|\bfs_{\bfb} - \bfS_{\bfA\bfZ}\bfy_k\|_2 < \texttt{tol} \cdot \|\bfs_\bfb\|_2$}
        \State \textbf{break}
    \EndIf

\EndFor

\State $\bfx_k = \bfZ\bfy_k$

\State \Return $\bfx_k$
\end{algorithmic}
\end{algorithm}


\subsection{Dealing with large-noise problems} \label{sec:residual-size}

As we will discuss in more detail in Section~\ref{sec:sketching}, 
there are multiple ways to select an oblivious embedding $\bfS$, which 
often come as a trade-off between theoretical guarantees and performances. 

However, it should be pointed out that the dimension of the sketching 
---which is inherently linked to the cost of computing it--- usually 
scales as $\mathcal O(\epsilon^{-2})$; see \cite{martinsson2020randomized}. Hence, small choices of 
$\epsilon$ are not feasible in practice, and the quasi-optimal constant 
$C_\epsilon = \frac{1 + \epsilon}{1 - \epsilon}$ cannot be ignored for problems where the residual is not small. This is the case for most image reconstruction problems, such as deblurring, inpainting, 
or those arising from tomography. In general, this issue arises 
any time the noise level $\delta$ is large since, according to the discrepancy principle mentioned in Section~\ref{sect:intro}, in order to recover a regularized solution one should stop as soon as the relative residual norm hits $\delta$. Moreover, theoretical studies on the behavior of LSQR for inverse problems show that the relative residual stabilizes around $\delta$ (even when the solver is in an under-regularization regime, i.e., when increasing the number of iterations); see, e.g., \cite{gazzola2015survey, HnPlSt09}. Experimental evidence suggests that this happens for FLSQR, too. We emphasize that the issue of a potentially enlarged sketched residual in the presence of a substantial residual is not specific of FLSQR, but it is rather 
related to the sketched relationship in the minimization 
problem \eqref{eq:sFLSQR}, as detailed in \eqref{res_bound}.

We now build an example to show this behavior, and compare the following solvers:
\begin{itemize}
    \item LSQR: with the standard implementation based on GKB;
    \item sLSQR: a modified version of LSQR, whereby the minimization problem (top rightmost equation in \eqref{eq:xlsqr}) at each iteration is solved via sketching rather than exploiting the usual orthogonality relations from GKB.
\end{itemize}
In this way we can more directly assess the impact of sketching for minimal residual methods applied to ill-posed problems affected by noise.\footnote{Note that, choosing $\tau(\bfv) = \bfv$ in \eqref{eq: modbasis}, the first solver is mathematically equivalent to FLSQR, while the latter is mathematically equivalent to sFLSQR.}
\paragraph{Experimental setup}
\label{par:setup}
\begin{itemize}
    \item We generate a matrix $\bfA$ of size $m \times n$, with 
      $m = 1024$ and $n = m / 2$, with decaying singular values, chosen 
      as 
      \begin{equation}\label{ill_decay_sv}
\rho^{1-i}, \quad i=1,\dots,n,
\end{equation}
and $\rho = 1.01$. The solution 
      $\bfx_\true$ is chosen as the constant vector of ones, 
      and $\bfb_\true$ is computed as $\bfb_\true = \bfA\bfx_\true$; the problem is normalized 
      to have $\| \bfb_\true \|_2 = 1$;
    \item The noisy vector $\bfb$ is computed by adding a white Gaussian random noise vector $\bfe$ to $\bfb_\true$, with 
      $\| \bfe \|_2 = \delta$ and 
      with $\delta \in \{ 0.01, 0.10 \}$; 
      \item The sketching is Gaussian, with 
     $s = 2k + 1$ rows, where $k$ is the maximum number of 
     iterations. 
    \item The solution is recovered with both the standard LSQR method and the sketched 
      counterpart described above;
\end{itemize}

\begin{figure}
    \centering
    \begin{tikzpicture}
    
    \begin{groupplot}[
        group style={
            group size=2 by 2,
            horizontal sep=1.6cm,
            vertical sep=2.05cm,
        },
        ylabel style={yshift=-1em},
        width=6.7cm,
        height=5cm,
        xlabel={Iterations},
        grid=both,
        tick label style={font=\small},
        label style={font=\small},
        title style={font=\small},
        legend style={
            font=\small,
            at={(0.95,0.95)},
            anchor=north east,
            legend columns=2,
        }
    ]
    
    \nextgroupplot[
        title={Noise level: 1\%},
        ylabel = {Relative residual}
    ]
    \addplot+[very thick,
        blue!75!white, mark = *, mark size = 1.5pt, mark repeat = 5]
    table[
        x=iter,
        y=l1_lsqr,
        col sep=space
    ]{paper_01_residuals.dat};
    \addlegendentry{LSQR}
    
    \addplot+[very thick,
        red!75!white, mark = triangle*, mark size = 1.5pt, mark repeat = 5]
    table[
        x=iter,
        y=l1_sflsqr,
        col sep=space
    ]{paper_01_residuals.dat};
    \addlegendentry{sLSQR}
    
    \nextgroupplot[
        title={Noise level: 1\%},
        ylabel = {Relative error}
    ]
    \addplot+[very thick,
        blue!75!white, mark = *, mark size = 1.5pt, mark repeat = 5]
    table[
        x=iter,
        y=l1_lsqr,
        col sep=space
    ]{paper_01_errors.dat};
    \addlegendentry{LSQR}
    
    \addplot+[very thick,
        red!75!white, mark = triangle*, mark size = 1.5pt, mark repeat = 5]
    table[
        x=iter,
        y=l1_sflsqr,
        col sep=space
    ]{paper_01_errors.dat};
    \addlegendentry{sLSQR}
    
    \nextgroupplot[
        title={Noise level: 10\%},
        ylabel = {Relative residual}
    ]
    \addplot+[very thick,
        blue!75!white, mark = *, mark size = 1.5pt, mark repeat = 5]
    table[
        x=iter,
        y=l2_lsqr,
        col sep=space
    ]{paper_01_residuals.dat};
    \addlegendentry{LSQR}
    
    \addplot+[very thick,
        red!75!white, mark = triangle*, mark size = 1.5pt, mark repeat = 5]
    table[
        x=iter,
        y=l2_sflsqr,
        col sep=space
    ]{paper_01_residuals.dat};
    \addlegendentry{sLSQR}
    
    \nextgroupplot[
        title={Noise level: 10\%},
        ylabel = {Relative error}
    ]
    \addplot+[very thick,
        blue!75!white, mark = *, mark size = 1.5pt, mark repeat = 5]
    table[
        x=iter,
        y=l2_lsqr,
        col sep=space
    ]{paper_01_errors.dat};
    \addlegendentry{LSQR}
    
    \addplot+[very thick,
        red!75!white, mark = triangle*, mark size = 1.5pt, mark repeat = 5]
    table[
        x=iter,
        y=l2_sflsqr,
        col sep=space
    ]{paper_01_errors.dat};
    \addlegendentry{sLSQR}
    
    \end{groupplot}
    
    \end{tikzpicture}

    \caption{Artificial test problem described in Section~\ref{sec:residual-size}, with noise levels $1\%$ and $10\%$. Residual and error norms for the solution 
      recovered with LSQR and sLSQR with Gaussian sketching.}
    \label{fig:artificial-01}
\end{figure}

The residuals and the error for this problem
are reported in 
Fig.~\ref{fig:artificial-01}. 
For both cases, the residuals obtained at the end 
of the iterations of sLSQR are around $30\%$
higher than the ones of LSQR; for the test case 
with noise level of $1\%$, this has a limited
impact, since the residual is much smaller; when 
the noise is larger, we see a distinct difference 
in both residuals and norms.
The LSQR and sLSQR error curves for $1\%$ noise level are very 
similar. The corresponding error plots for $10\%$ noise level are more difficult 
to interpret. We have a good match between LSQR and sLSQR in the first 
iterations, and then the two become quite different (although this happens when both LSQR and sLSQR are after the semiconvergence point, and therefore one should in principle have already stopped the solvers). From our experience, 
the results are not always worse: sometimes the sketching gives good results, 
sometimes it does not. It is, however, generally not very reliable. 



\subsection{Sketched FLSMR}
\label{sec:sflsmr}

The tests run with data affected by large noise suggest that sFLSQR can be ineffective for problems affected by noise of large magnitude and large least squares 
residual (obtained by stopping the iterations according to the discrepancy principle). 
Since FLSMR is implicitly solving a least squares
problem whose residual is damped by multiplying 
it with $\mxA^{\!\top}$, we expect its sketched version to have 
better performances. For this reason, we now extend the FLSMR method
discussed in Section~\ref{sec:flsqr} adding a sketching step, providing an alternative to sFLSQR. 

As for sFLSQR, we consider the bases $\modP_k, \modZ_k = \tau(\modP_k), \modW_k$
computed by the FGK algorithm with partial reorthogonalization as 
in \eqref{eq: FGK}. We then sketch the minimization problem associated with
FLSMR, which yields
\begin{equation}\label{eq:sFLSMR}
\begin{split}
\bfx_k^{\text{sFLSMR}}=\modZ_k \bfy_k^{\text{sFLSMR}},\\
\mbox{where}\quad
\bfy_k^{\text{sFLSMR}}&=\arg\min_{\bfy\in\bbR^k}\| \bfS(\bfA^{\!\top}\bfA \modZ_k \bfy - \bfA^{\!\top}\bfb)\|_2\\
 &= 
  \arg\min_{\bfy\in\bbR^k} \| \bfS \bfA^{\!\top} \modW_k \modH_{k+1,k} \bfy - \bfS \bfA^{\!\top} \bfb\|_2 \\ 
 &= \arg\!\min_{\bfy\in\bbR^k}\! \| \bfS \modP_{k+1} (\modT_{k+1} \modH_{k+1,k} \bfy\! -\! \|\bfA^{\!\top} \bfb\|_2\bfe_1)\|_2. 
\end{split}
\end{equation}
Similarly to what previously discussed for sFLSQR, 
solving the least squares problem in \eqref{eq:sFLSMR} only requires the banded matrices $\modT_k, \modH_{k+1,k}$ 
(which are obtained from the (short) recurrence relations \eqref{eq: FGK}) and  
computing $\bfS \modP_k$ (which can be extended one 
column at a time as the iterations proceed). 
%
%
Equivalently, we can compute the columns of 
$\bfS \bfA^{\!\top} \modW_k$ as the iteration proceeds, and 
then solve the least square problem obtained by right 
multiplication by $\modH_{k+1,k}$. The latter is 
the procedure we have implemented in our code. 
The dominant terms in the computational cost are the same as sFLSQR (detailed at the end of Section~\ref{sec:sflsqr}). The pseudocode 
for sFLSMR is reported in Algorithm~\ref{alg:sflsmr}.
\begin{algorithm}[h]
\caption{sFLSMR}
\label{alg:sflsmr}
\begin{algorithmic}[1]
\Require Matrix $\bfA$, right-hand-side $\bfb$, sketching matrix
$\bfS$, maximum iterations \texttt{maxit}, tolerance \texttt{tol}, orthogonalization window $\ell$, truncation operator $\tau$
\Ensure Approximate solution $\bfx_k$
\State $\beta = \|\bfb\|_2$, \quad $\bfw_1 = \bfb / \beta$
\State $\bfz = \bfA^{\!\top} \bfw_1$
\State $\bfs_{\bfA^{\!\top} \bfb} = \beta \cdot \bfS \bfz $
\State $\bfZ = [\ ]$, \quad $\bfW = [\bfw_1]$, \quad $\bfS_{\bfA^{\!\top}\bfW} = [\bfs_{\bfA^{\!\top} \bfb}/\beta ]$
\State $\bfH=[\ ]$ \Comment{corresponding to $\bfH_{k+1,k}^{(p)}$  at the $k$ iteration of \eqref{eq: FGK}}

\For{$k = 1,\dots,\texttt{maxit}$}

    \For{$j = \max(1,k-\ell),\dots,k-1$}
        \State $\bfz = \bfz - \langle \bfz_j,\bfz\rangle\bfz_j$
    \EndFor

    \State $\bfz = \bfz / \|\bfz\|_2$
    \State $\bfz_k = \tau(\bfz)$
    \State $\bfZ=[\bfZ, \bfz_k]$

    \State $\bfw = \bfA \bfz_k$

    \For{$j = \max(1,k-\ell),\dots,k$}
        \State $[\bfH]_{j,k} = \langle \bfw_j,\bfw\rangle$
        \State $\bfw = \bfw - [\bfH]_{j,k}\bfw_j$
    \EndFor
    \State $[\bfH]_{k+1,k} = \|\bfw\|_2$ 
    \State $\bfw_{k+1} = \bfw/[\bfH]_{k+1,k}$
    \State $\bfW = [\bfW, \bfw_{k+1}]$

    \State $\bfz = \bfA^{\!\top} \bfw_{k+1}$ 
    \State 
    $
        \bfy_k
        =
        \arg\min_{\bfy}
        \|
        \bfs_{\bfA^{\!\top} \bfb} - [\bfS_{\bfA^{\!\top}\bfW}, \bfS\bfz][\bfH]_{1:k+1,1:k}\bfy
        \|_2
    $    \If{$\|\bfs_{\bfA^{\!\top}\bfb} - \bfS_{\bfA^{\!\top}\bfW} \bfy_k\|_2 < \texttt{tol} \cdot \|\bfs_{\bfA^{\!\top}\bfb}\|_2$}
        \State \textbf{break}
    \EndIf
    \State $\bfS_{\bfA^{\!\top}\bfW} = [\bfS_{\bfA^{\!\top}\bfW}, \bfS\bfz]$
\EndFor

\State $\bfx_k = \bfZ\bfy_k$

\State \Return $\bfx_k$
\end{algorithmic}
\end{algorithm}

\begin{remark}
    We note that sFLSMR is mathematically equivalent to 
    sketching the minimization problem \eqref{eq:sFLSQR} in sFLSQR with $\bfS \bfA^{\!\top}$
    in place of $\bfS$; indeed, trivially,
    \[
      \|
        \bfS \bfA^{\!\top} (\bfA \modZ_k \bfy - \bfb)
      \|_2 = 
      \|
        \bfS (\bfA^{\!\top} \bfA \modZ_k \bfy - \bfA^{\!\top} \bfb)
      \|_2,
    \]
where the leftmost quantity has to be regarded as sFLSQR with sketching matrix $\bfS\bfA^{\!\top}$, and the rightmost quantity defines sFLSMR with sketching matrix $\bfS$; see \eqref{eq:sFLSMR}. 
This interpretation already offers some insight into the reasons why sFLSMR is expected to provide lower residual norm than sFLSQR, which will be made more precise in Section~\ref{sec: bounds}. 
Indeed, 
taking $\bfU\bfSigma\bfV^{\!\top}=\bfA$ to be the SVD of $\bfA$ and assuming, without loss of generality, that $\|\bfA\|_2=\|\bfSigma\|_2=\sigma_1(\bfA)\leq 1$, we get $\|\bfA^{\!\top}(\bfA \modZ_k \bfy - \bfb)\|_2\leq\|\bfA \modZ_k \bfy - \bfb\|_2$. This trivial estimate can be refined considering the following general facts about linear inverse problems and iterative regularization methods (see \cite{hansen2010discrete}):
\begin{itemize}
\item the singular values of $\bfA$ typically decay quite quickly; 
\item correspondingly, the singular vectors of $\bfA$ display increasing oscillations;
\item the iterations of a solver for \eqref{eq:inverseproblem} may be stopped according to the discrepancy principle as a proxy to establish that the residual $\bfA \modZ_k \bfy - \bfb$ resembles noise. 
\end{itemize}
Therefore, premultiplying by $\bfU^{\!\top}$ the residual vector $\modW_{k+1}(\modH_{k+1,k}\bfy-\|\bfb\|_2\bfe_1)$ results in a vector whose last  entries dominate the first ones; when the resulting vector is rescaled by the singular values of $\bfA$ (i.e., premultiplication by $\bfSigma$), the last dominant entries are reduced, leading to
$\|\bfA^{\!\top}(\bfA \modZ_k \bfy - \bfb)\|_2\ll\|\bfA \modZ_k \bfy - \bfb\|_2$. 
Applying residual bounds analogous to \eqref{res_bound} to the residual premultiplied by $\bfA^{\!\top}$ is therefore likely to lead to tighter bounds for the sketched normal equations residual. The above facts can be made more precise if LSQR (rather than FLSQR with partial orthogonalization) is considered, assuming the so-called discrete Picard condition and a specific decay of the singular values of $\bfA$; see \cite{GazzolaSilvia2016Iotd, gazzola2015survey,  HnPlSt09}.
\end{remark}

\paragraph{Performance on large noise problems}

To demonstrate that sFLSMR is effective for problems with large residuals, we repeat
the artificial experiment of Section~\ref{sec:residual-size}, including the 
results obtained by running sFLSMR along with LSQR and sFLSQR. We recall
that in this context we are just testing 
the use of sketching, 
and there is no use of the ``flexibility'' in the methods; therefore we use the acronym sLSQR in place of sFLSQR (as already done in Section~\ref{sec:residual-size}) and, similarly, we use the acronym sLSMR in place of sFLSMR (to signify that  the minimization problem on the top rightmost equation in \eqref{eq:xlsmr} is solved via sketching rather 
than exploiting the usual orthogonality relations from GKB). The results are reported in Fig.~\ref{fig:artificial-01-sflsmr}. It is immediately
visible that the residual behavior of sFLSMR tracks much more closely that of LSQR on large noise problems. The error behavior is also similar in the first iterations, then has a mild delay, but is much 
more robust to semiconvergence. We emphasize that this is not directly related to the sketching and, even if not shown here, a similar behavior is visible 
when running standard LSMR \cite{ChungPalmer2015}. As expected, LSQR and sFLSQR have a slightly lower error
than sFLSMR for low-noise problems (although the latter error may decrease to a similar value if more iterations are performed); sFLSMR is a better choice when large-noise is present.

\begin{figure}
    \centering
\begin{tikzpicture}

\begin{groupplot}[
    group style={
        group size=2 by 2,
        horizontal sep=1.6cm,
        vertical sep=1.85cm
    },
    ylabel style={yshift=-1em},
    width=6.7cm,
    height=5cm,
    xlabel={Iterations},
    grid=both,
    tick label style={font=\small},
    label style={font=\small},
    title style={font=\small},
    legend style={
        font=\small,
        at={(0.95,0.95)},
        anchor=north east,
        legend columns=2
    }
]

\nextgroupplot[
    title={Noise level: 1\%},
    ylabel={Relative residual}
]
\addplot+[very thick,
        blue!75!white, mark = *, mark size = 1.5pt, mark repeat = 5]
table[
    x=iter,
    y=l1_lsqr,
    col sep=space
]{paper_01_sflsmr_residuals.dat};
\addlegendentry{LSQR}

\addplot+[mark = triangle*, mark size = 1.5pt, mark repeat = 5, very thick,
        red!75!white]
table[
    x=iter,
    y=l1_sflsqr,
    col sep=space
]{paper_01_sflsmr_residuals.dat};
\addlegendentry{sLSQR}

\addplot+[mark = square*, mark size = 1.5pt, mark repeat = 5, very thick, mark options={
    fill=green!60!black,
    draw=green!60!black
  }]
table[
    x=iter,
    y=l1_sflsmr,
    col sep=space
]{paper_01_sflsmr_residuals.dat};
\addlegendentry{sLSMR}

\nextgroupplot[
    title={Noise level: 1\%},
    ylabel={Relative error}
]
\addplot+[mark = *, mark size = 1.5pt, mark repeat = 5, very thick,
        blue!75!white]
table[
    x=iter,
    y=l1_lsqr,
    col sep=space
]{paper_01_sflsmr_errors.dat};
\addlegendentry{LSQR}

\addplot+[mark = triangle*, mark size = 1.5pt, mark repeat = 5, very thick,
        red!75!white]
table[
    x=iter,
    y=l1_sflsqr,
    col sep=space
]{paper_01_sflsmr_errors.dat};
\addlegendentry{sLSQR}

\addplot+[mark = square*, mark size = 1.5pt, mark repeat = 5, very thick, mark options={fill=green!60!black, draw=green!60!black}, green!60!black]
table[
    x=iter,
    y=l1_sflsmr,
    col sep=space
]{paper_01_sflsmr_errors.dat};
\addlegendentry{sLSMR}

\nextgroupplot[
    title={Noise level: 10\%},
    ylabel={Relative residual}
]
\addplot+[mark = *, mark size = 1.5pt, mark repeat = 5, very thick,
        blue!75!white]
table[
    x=iter,
    y=l2_lsqr,
    col sep=space
]{paper_01_sflsmr_residuals.dat};
\addlegendentry{LSQR}

\addplot+[mark = triangle*, mark size = 1.5pt, mark repeat = 5, very thick,
        red!75!white]
table[
    x=iter,
    y=l2_sflsqr,
    col sep=space
]{paper_01_sflsmr_residuals.dat};
\addlegendentry{sLSQR}

\addplot+[mark = square*, mark size = 1.5pt, mark repeat = 5, very thick, mark options={fill=green!60!black, draw=green!60!black}, green!60!black]
table[
    x=iter,
    y=l2_sflsmr,
    col sep=space
]{paper_01_sflsmr_residuals.dat};
\addlegendentry{sLSMR}

\nextgroupplot[
    title={Noise level: 10\%},
    ylabel={Relative error}
]
\addplot+[mark = *, mark size = 1.5pt, mark repeat = 5, very thick,
        blue!75!white]
table[
    x=iter,
    y=l2_lsqr,
    col sep=space
]{paper_01_sflsmr_errors.dat};
\addlegendentry{LSQR}

\addplot+[mark = triangle*, mark size = 1.5pt, mark repeat = 5, very thick,
        red!75!white]
table[
    x=iter,
    y=l2_sflsqr,
    col sep=space
]{paper_01_sflsmr_errors.dat};
\addlegendentry{sLSQR}

\addplot+[mark = square*, mark size = 1.5pt, mark repeat = 5, very thick, mark options={fill=green!60!black, draw=green!60!black}, green!60!black]
table[
    x=iter,
    y=l2_sflsmr,
    col sep=space
]{paper_01_sflsmr_errors.dat};
\addlegendentry{sLSMR}

\end{groupplot}

\end{tikzpicture}    
    \caption{Artificial test problem described in  Section~\ref{sec:residual-size},  
      with noise levels $1\%$ and $10\%$. Residual and error norms for the solution 
      recovered with LSQR, sLSQR, and sLSMR with Gaussian sketching.}
    \label{fig:artificial-01-sflsmr}
\end{figure}

\subsection{Bounds on the sketching error}\label{sec: bounds}
{In this section, we establish theoretical bounds for the sFLSQR and sFLSMR residuals.
Recall that, at each iteration, both methods construct the same subspace $\modZ_k$ and compute an approximate solution in $\mathrm{range}(\modZ_k)$ by solving two different sketched minimization problems.
In Theorem~\ref{thm:deterministic_bound}, we derive deterministic relations between the residuals produced by sFLSQR and sFLSMR and the best residual attainable in $\mathrm{range}(\modZ_k)$.}
In Corollary~\ref{thm:probabilistic_bound_LSQR} we further strengthen this result for sFLSQR and assuming Gaussian sketchings, obtaining a bound in expectation. 
For sFLSMR, deriving an analogous probabilistic bound appears considerably more challenging, as it would require detailed information about the spectrum of $\mathbf{A}$ and its interaction with the sketch. Despite this theoretical difficulty, our numerical experiments (Figure~\ref{fig:rho_0.8} and Section~\ref{sec:numexp}), demonstrate that sFLSMR with sketching consistently benefits from the decay of the singular values of $\bfA$, yielding a behavior that is even more favorable than in the sFLSQR case.



\begin{theorem}\label{thm:deterministic_bound}
    Let $\mxA\in \mathbb{R}^{m\times n}$, $\modZ_k\in \mathbb{R}^{n\times k}$, $\mxS\in \mathbb{R}^{s\times m}$ with $k\leq s \leq n$ and $\modZ_k$ generated via FGK \eqref{eq: FGK}. 
    Let us denote by $r_k^{\textrm{\emph{sFLSQR}}}$, $r_k^{\text{\emph{sFLSMR}}}$, and $r_k^{\textrm{\emph{opt}}}$ the sFLSQR, sFLSMR and optimal residual norms, respectively, where the latter is such that
    \begin{equation}\label{ropt}
r_k^{\textrm{\emph{opt}}}=\|\bfA\modZ_k\bfy_k^{\textrm{\emph{opt}}}-\bfb\|_2=\min_{\bfy\in\bbR^k}\|\bfA\modZ_k\bfy-\bfb\|_2.
    \end{equation}
    Then, if $\mxS \mxA \mxZ_k^{(p)}$ 
    and $\mxS \mxA^{\!\top} \mxA \mxZ_k^{(p)}$ are full rank,
    \begin{equation}\label{bound1}
r_k^{\textrm{\emph{sFLSQR}}}\leq r_k^{\textrm{\emph{opt}}} \sqrt{1 + \|(\mxS\mxQ)^\dagger \mxS \mxQ_\perp\|_2^2},
\end{equation}
\begin{equation}\label{bound2}
r_k^{\textrm{\emph{sFLSMR}}}\leq r_k^{\textrm{\emph{opt}}} \sqrt{1 + \|(\mxS\mxA^{\!\top}\mxQ)^\dagger \mxS\mxA^{\!\top} \mxQ_\perp\|_2^2},
\end{equation}
where $\mxQ$ is orthogonal with columns 
spanning $\mathrm{range}(\mxA\modZ_k)$ and $\mxQ_\perp$ is a basis for
its orthogonal complement.
\end{theorem}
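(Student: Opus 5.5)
The plan is to treat both bounds as instances of one deterministic statement about sketched least squares: if $\mxM$ has full column rank and $\mxS\mxM$ is full rank, then the sketched solution $\bfy_\mxS=\arg\min_\bfy\|\mxS(\mxM\bfy-\bfb)\|_2$ satisfies $\|\mxM\bfy_\mxS-\bfb\|_2\le\|\mxM\bfy^{\rm opt}-\bfb\|_2\sqrt{1+\|(\mxS\mxQ)^\dagger\mxS\mxQ_\perp\|_2^2}$, where $\mxQ$ is an orthonormal basis of $\mathrm{range}(\mxM)$. Bound \eqref{bound1} is this statement with $\mxM=\mxA\modZ_k$. Bound \eqref{bound2} follows from the Remark preceding this section: sFLSMR equals sFLSQR with sketch $\mxS\mxA^{\!\top}$ in place of $\mxS$, so we apply the same statement with $\mxS\mxA^{\!\top}$ as the sketching operator. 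The full-rank hypotheses ensure $\mxS\mxQ$, respectively $\mxS\mxA^{\!\top}\mxQ$, has full column rank, so the sketched minimizers are unique and the pseudoinverse is a left inverse.

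For the core estimate, write $\mxA\modZ_k=\mxQ\mxR$ with $\mxR$ invertible; since $\mxS\mxA\modZ_k$ has full rank, $\mxA\modZ_k$ has full column rank. Decompose $\bfb=\mxQ\mxQ^{\!\top}\bfb+\mxQ_\perp\mxQ_\perp^{\!\top}\bfb$. The optimal residual is then $\bfr^{\rm opt}=-\mxQ_\perp\mxQ_\perp^{\!\top}\bfb$, with $r_k^{\rm opt}=\|\mxQ_\perp^{\!\top}\bfb\|_2$. Setting $\bfc=\mxR\bfy$, the sketched problem is $\min_\bfc\|\mxS\mxQ\bfc-\mxS\bfb\|_2$, whose solution is
\[
\bfc_\mxS=(\mxS\mxQ)^\dagger\mxS\bfb=\mxQ^{\!\top}\bfb+(\mxS\mxQ)^\dagger\mxS\mxQ_\perp\mxQ_\perp^{\!\top}\bfb ,
\]
using $(\mxS\mxQ)^\dagger\mxS\mxQ=\mathbf{I}$. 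Hence the sketched residual is
\[
\mxQ\bfc_\mxS-\bfb=\mxQ(\mxS\mxQ)^\dagger\mxS\mxQ_\perp\mxQ_\perp^{\!\top}\bfb-\mxQ_\perp\mxQ_\perp^{\!\top}\bfb .
\]
The two terms are orthogonal, so Pythagoras gives a squared norm of $\|(\mxS\mxQ)^\dagger\mxS\mxQ_\perp\mxQ_\perp^{\!\top}\bfb\|_2^2+\|\mxQ_\perp^{\!\top}\bfb\|_2^2\le(1+\|(\mxS\mxQ)^\dagger\mxS\mxQ_\perp\|_2^2)(r_k^{\rm opt})^2$. Here $\mxQ_\perp$ is taken with orthonormal columns, so that $\|\mxQ_\perp\bfv\|_2=\|\bfv\|_2$.

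For sFLSMR, repeat the computation with $\mxS$ replaced by $\mxS\mxA^{\!\top}$. The objective in \eqref{eq:sFLSMR} is $\|\mxS\mxA^{\!\top}(\mxQ\bfc-\bfb)\|_2$, which gives $\bfc=(\mxS\mxA^{\!\top}\mxQ)^\dagger\mxS\mxA^{\!\top}\bfb$. The identity $(\mxS\mxA^{\!\top}\mxQ)^\dagger\mxS\mxA^{\!\top}\mxQ=\mathbf{I}$ holds because $\mxS\mxA^{\!\top}\mxA\modZ_k=\mxS\mxA^{\!\top}\mxQ\mxR$ has full column rank. The same orthogonal split then yields \eqref{bound2}. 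Note that the residual measured is still the original $\|\mxA\modZ_k\bfy-\bfb\|_2$, which is what $r_k^{\rm sFLSMR}$ denotes.

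The main obstacle is bookkeeping rather than substance. We must check that the rank hypotheses give exactly the left-inverse identities; $s\ge k$ is necessary for this. We must also make sure $\mxQ_\perp$ is orthonormal, or else interpret "basis" as orthonormal, since otherwise a factor $\|\mxQ_\perp\|_2$ would appear. Nothing about FGK structure or the nonlinearity of $\tau$ enters beyond $\modZ_k$ being a fixed matrix at step $k$.
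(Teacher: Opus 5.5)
Your proposal is correct and follows essentially the same route as the paper. Both split the residual into its components in $\mathrm{range}(\mxQ)$ and in the orthogonal complement. Both use $(\mxS\mxQ)^\dagger\mxS\mxQ=\mathbf{I}$ (resp.\ $(\mxS\mxA^{\!\top}\mxQ)^\dagger\mxS\mxA^{\!\top}\mxQ=\mathbf{I}$) to reduce the in-range part to $(\mxS\mxQ)^\dagger\mxS\mxQ_\perp\mxQ_\perp^{\!\top}\bfb$, and then conclude by Pythagoras with $r_k^{\mathrm{opt}}=\|\mxQ_\perp^{\!\top}\bfb\|_2$. The differences are cosmetic: your change of variables $\bfc=\mathbf{R}\bfy$ explicitly justifies the identity $\mxA\modZ_k(\mxS\mxA\modZ_k)^\dagger=\mxQ(\mxS\mxQ)^\dagger$, which the paper uses without comment, and you treat sFLSMR as the same lemma with sketch $\mxS\mxA^{\!\top}$ instead of repeating the computation.
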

\begin{proof}
For any $\bfy\in\mathbb{R}^{k}$, $\|\mxA\modZ_k\bfy-\bfb\|_2$ satisfies
\begin{align}
 \|\mxA\modZ_k\bfy-\bfb\|_2^2 &= \|\mxQ\mxQ^{\!\top}(\mxA\modZ_k\bfy-\bfb)\|_2^2+\|(\mxI-\mxQ\mxQ^{\!\top})(\mxA\modZ_k\bfy-\bfb)\|_2^2 \nonumber\\
&= \|\mxQ^{\!\top}(\mxA\modZ_k\bfy-\bfb)\|_2^2 + \|(\mxI-\mxQ\mxQ^{\!\top})\bfb\|_2^2.\label{res_sum}
\end{align}
Note that $r_k^{\textrm{{opt}}}$ in \eqref{ropt} is such that 
\begin{equation}\label{eq: FLSQRres}
r_k^{\textrm{{opt}}}=\mathrm{min}_{\bfy\in\bbR^k} \|\mxA\modZ_k\bfy-\bfb\|_2=\|(\mxI-\mxQ\mxQ^{\!\top})\bfb\|_2=\|\mxQ_\perp^{\!\top} \bfb\|_2,
\end{equation}
where the above residual is independent of the choice of $\bfy$. Therefore, when relating $r_k^{\textrm{{opt}}}$ to $r_k^{\text{sFLSQR}}$ and $r_k^{\text{sFLSMR}}$, we focus on bounding only on the first term in \eqref{res_sum}. 
Let ${\bfx}_k^{\text{sFLSQR}}=\modZ_k{\bfy}_k^{\text{sFLSQR}}$ be the sFLSQR solution defined in \eqref{eq:sFLSQR}. 
We have
\begin{align*}
\|\mxQ^{\!\top}(\mxA\modZ_k{\bfy}_k^{\text{sFLSQR}}-\bfb)\|_2 &= \|\mxQ^{\!\top}(\mxA\modZ_k(\mxS\mxA\modZ_k)^\dagger \mxS\bfb -\bfb)\|_2\\
&= \|\mxQ^{\!\top}(\mxQ(\mxS\mxQ)^\dagger \mxS \bfb -\bfb)\|_2 \\
&= \|(\mxS\mxQ)^\dagger \mxS \bfb - \mxQ^{\!\top}\bfb\|_2\\ 
& = \|((\mxS\mxQ)^\dagger \mxS - \mxQ^{\!\top})(\mxQ\mxQ^{\!\top} + \mxQ_\perp \mxQ_\perp^{\!\top}) \bfb\|_2  \\
&=\|((\mxS\mxQ)^\dagger \mxS \mxQ_\perp \mxQ_\perp^{\!\top}) \bfb\|_2\\
& \leq \|(\mxS\mxQ)^\dagger \mxS \mxQ_\perp\|_2\|\mxQ_\perp^{\!\top} \bfb\|_2,
\end{align*}
where, in deriving the last equality, we have used the fact that $(\bfS\bfQ)^\dagger(\bfS\bfQ)=\bfI$ (because $\bfS\bfQ$ has full column rank). Therefore
\begin{align*}
(r_k^{\text{sFLSQR}})^2&=\|\mxQ^{\!\top}(\mxA\modZ_k{\bfy}_k^{\text{sFLSQR}}-\bfb)\|_2^2+\|\mxQ_\perp^{\!\top} \bfb\|_2^2\\
&\leq \|(\mxS\mxQ)^\dagger \mxS \mxQ_\perp\|_2^2\|\mxQ_\perp^{\!\top} \bfb\|_2^2+\|\mxQ_\perp^{\!\top} \bfb\|_2^2,
\end{align*}
which leads to \eqref{bound1}. 
Let ${\bfx}_k^{\text{sFLSMR}}=\modZ_k{\bfy}_k^{\text{sFLSMR}}$ be the sFLSMR solution defined in \eqref{eq:sFLSMR}.
We have
\begin{align*}
 \|\mxQ^{\!\top}(\mxA\modZ_k{\bfy}_k^{\text{sFLSMR}}-\bfb)\|_2&= \|\mxQ^{\!\top}(\mxA\modZ_k(\mxS\mxA^{\!\top}\mxA\modZ_k)^\dagger \mxS\mxA^{\!\top}\bfb -\bfb)\|_2 \\
 &=\|\mxQ^{\!\top}(\mxQ(\mxS\mxA^{\!\top}\mxQ)^\dagger \mxS \mxA^{\!\top}\bfb -\bfb)\|_2 \\
 &=\|(\mxS\mxA^{\!\top}\mxQ)^\dagger \mxS\mxA^{\!\top} \bfb - \mxQ^{\!\top}\bfb\|_2\\
 & = \|((\mxS\mxA^{\!\top}\mxQ)^\dagger \mxS \mxA^{\!\top}- \mxQ^{\!\top})(\mxQ\mxQ^{\!\top} + \mxQ_\perp \mxQ_\perp^{\!\top}) \bfb\|_2  \\
 &=\|((\mxS\mxA^{\!\top}\mxQ)^\dagger \mxS \mxA^{\!\top} \mxQ_\perp \mxQ_\perp^{\!\top}) \bfb\|_2\\
 & \leq \|(\mxS\mxA^{\!\top}\mxQ)^\dagger \mxS \mxA^{\!\top} \mxQ_\perp \|_2 \|\mxQ_\perp^{\!\top} \bfb\|_2,
\end{align*}
where, in deriving the last equality, we have used the fact that $(\bfS\bfA^{\!\top}\bfQ)^\dagger(\bfS\bfA^{\!\top}\bfQ)=\bfI$ (because $\bfS\bfA^{\!\top}\bfQ$ has full column rank). Therefore
\begin{align*}
(r_k^{\text{sFLSMR}})^2&=\|\mxQ^{\!\top}(\mxA\modZ_k{\bfy}_k^{\text{sFLSMR}}-\bfb)\|_2^2+\|\mxQ_\perp^{\!\top} \bfb\|_2^2\\
&\leq \|(\mxS\mxA^{\!\top}\mxQ)^\dagger \mxS \mxA^{\!\top} \mxQ_\perp \|_2^2\|\mxQ_\perp^{\!\top} \bfb\|_2^2+\|\mxQ_\perp^{\!\top} \bfb\|_2^2,
\end{align*}
which leads to \eqref{bound2}.
\end{proof}
\begin{remark}\label{rem:spectral_decay_bounds}
We observe, at present only experimentally 
(see Figure~\ref{fig:various_rho}), that the 
bound \eqref{bound2} is smaller than 
\eqref{bound1} when the matrix $\bfA$ exhibits spectral decay.
This is also reflected in a smaller residual for s(F)LSMR
than for s(F)LSQR, although the bounds are not sufficiently
sharp to fully explain this behavior.

There is one particular case in which the difference in behavior
is especially clear. Suppose that $\bfA$ has rank $k$, which may be
viewed as the limiting case of a very sharp singular-value gap.
Then, after $k$ steps of s(F)LSQR/s(F)LSMR, the rank of $\mxA \modZ_k$
is also equal to $k$, provided that no breakdown has occurred.
In this case, we obtain
\[
r_k^{\text{sFLSMR}} = r_k^{\text{opt}}.
\]
Indeed, since
\[
\mathrm{range}(\mxQ) = \mathrm{range}(\mxA \modZ_k)
     = \mathrm{range}(\mxA),
\]
we have $\mxA^{\!\top} \mxQ_\perp = \bfzero$, and hence
\[
\|(\mxS\mxA^{\!\top} \mxQ)^\dagger
  \mxS\mxA^{\!\top} \mxQ_\perp\|_2^2 = 0.
\]
\end{remark}

Following established techniques in randomized numerical linear algebra, and specializing to Gaussian sketching matrices, we obtain the following bound in expectation for 
sFLSQR.
\begin{corollary}\label{thm:probabilistic_bound_LSQR}
Let $\mxA\in\mathbb{R}^{m\times n}$, $\modZ_k\in\mathbb{R}^{n\times k}$, $\mxS\in\mathbb{R}^{s\times m}$, with $k\leq s \leq n$,  $\modZ_k$ generated via FGK \eqref{eq: FGK} 
and $\bfS$ a Gaussian sketching matrix (i.e., each entry is an independent\ $\mathcal{N}(0,1)$ random variable). Then, the sketched FLSQR residual  norm $r_{k}^{\textrm{\emph{sFLSQR}}}$ satisfies
\begin{equation}
\mathbb{E}\left[(r_{k}^{\textrm{\emph{sFLSQR}}})^2\right]=
(r_k^{\textrm{\emph{opt}}})^2\cdot \left(1 + \frac{s}{s - k - 1}\right),
\end{equation}
where $r_k^{\textrm{\emph{opt}}}$ denotes the optimal residual norm for solutions in $\modZ_k$, defined as in \eqref{ropt}.
\end{corollary}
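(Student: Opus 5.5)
The plan is to start from the exact decomposition in the proof of Theorem~\ref{thm:deterministic_bound}, before any norm inequality is applied. Take $\mxQ\in\mathbb{R}^{m\times k}$ with orthonormal columns spanning $\mathrm{range}(\mxA\modZ_k)$, and let $\mxQ_\perp$ complete it to an orthogonal matrix. Equations \eqref{res_sum}--\eqref{eq: FLSQRres} give
\[
(r_k^{\text{sFLSQR}})^2=(r_k^{\text{opt}})^2+\|(\mxS\mxQ)^\dagger\mxS\mxQ_\perp\mxQ_\perp^{\!\top}\bfb\|_2^2 .
\]
The whole task therefore reduces to computing the expectation of the second term, with $\mxS$ Gaussian. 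I will treat $\modZ_k$, and hence $\mxQ$, as fixed, i.e.\ independent of $\mxS$. This is legitimate because FGK \eqref{eq: FGK} never touches $\mxS$.

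\textbf{Step 1 (splitting the sketch).} By rotational invariance of the Gaussian distribution, $\mxS[\mxQ,\mxQ_\perp]$ is again an $s\times m$ Gaussian matrix. Hence $\mxG_1:=\mxS\mxQ\in\mathbb{R}^{s\times k}$ and $\mxG_2:=\mxS\mxQ_\perp$ are independent Gaussian matrices. Set $\bfc:=\mxQ_\perp^{\!\top}\bfb$, so that $\|\bfc\|_2=r_k^{\text{opt}}$. Then $\bfg:=\mxG_2\bfc\sim\mathcal N(\bfzero,\|\bfc\|_2^2\mxI_s)$, and $\bfg$ is independent of $\mxG_1$. Moreover, $\mxG_1$ has full column rank almost surely, which is the full-rank hypothesis of Theorem~\ref{thm:deterministic_bound}.

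\textbf{Step 2 (conditional expectation).} Condition on $\mxG_1$ and use $\mathbb{E}[\bfg\bfg^{\!\top}]=\|\bfc\|_2^2\mxI_s$. This gives
\[
\mathbb{E}\bigl[\|\mxG_1^\dagger\bfg\|_2^2\,\big|\,\mxG_1\bigr]=\|\bfc\|_2^2\,\|\mxG_1^\dagger\|_F^2=\|\bfc\|_2^2\,\mathrm{tr}\bigl((\mxG_1^{\!\top}\mxG_1)^{-1}\bigr).
\]

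\textbf{Step 3 (inverse Wishart).} The matrix $\mxG_1^{\!\top}\mxG_1$ is Wishart $W_k(\mxI,s)$, and the standard inverse-Wishart mean is $\mathbb{E}[(\mxG_1^{\!\top}\mxG_1)^{-1}]=\mxI_k/(s-k-1)$ for $s>k+1$. This is the established RandNLA fact used in Halko--Martinsson--Tropp. Taking the trace and combining with Step~2 yields the factor multiplying $(r_k^{\text{opt}})^2$.

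\textbf{Main obstacle.} The difficulty is reconciling this factor with the constant $s/(s-k-1)$ as the statement words it. Carried out exactly, Steps~2--3 give
\[
\mathbb{E}\bigl[(r_k^{\text{sFLSQR}})^2\bigr]=(r_k^{\text{opt}})^2\Bigl(1+\frac{k}{s-k-1}\Bigr),
\]
not the claimed factor. The only route I see to produce $s$ in the numerator is to bound $\|\mxG_1^\dagger\bfg\|_2^2$ through the separate factors $\|\mxG_1^\dagger\|_2^2\,\|\bfg\|_2^2$ and take $\mathbb{E}\|\bfg\|_2^2=s\|\bfc\|_2^2$. However, that gives at best an inequality, and even then $\mathbb{E}\|\mxG_1^\dagger\|_2^2$ is not $1/(s-k-1)$. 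So I do not expect to prove the equality as stated. I would carry out the computation above and, unless a different normalization of $\mxS$ or of the residual is intended, report that the correct identity has factor $1+k/(s-k-1)$. Since $k<s$, this implies the stated expression only as an upper bound.
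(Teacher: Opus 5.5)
Your argument is correct and follows the same route as the paper's proof. Both use the exact identity from the proof of Theorem~\ref{thm:deterministic_bound}, independence of $\mathbf{S}\mathbf{Q}$ and $\mathbf{S}\mathbf{Q}_\perp$, and then Propositions~10.1 and~10.2 of \cite{hmt}, which are exactly your Steps~2 and~3.

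The mismatch you found is an error in the paper, not a gap in your proof. The paper asserts $\mathbb{E}\|(\mathbf{S}\mathbf{Q})^\dagger\|_F^2=s/(s-k-1)$. However, Proposition~10.2 of \cite{hmt}, applied to the $k\times s$ Gaussian matrix $(\mathbf{S}\mathbf{Q})^{\top}$ with oversampling $p=s-k$, gives $k/(p-1)=k/(s-k-1)$, which is your inverse-Wishart value. So the correct identity is
$\mathbb{E}[(r_k^{\mathrm{sFLSQR}})^2]=(r_k^{\mathrm{opt}})^2\bigl(1+\frac{k}{s-k-1}\bigr)=(r_k^{\mathrm{opt}})^2\,\frac{s-1}{s-k-1}$.

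Two sanity checks confirm this:
\begin{enumerate}
\item For $k=1$, $\mathbb{E}\|(\mathbf{S}\mathbf{Q})^\dagger\|_F^2=\mathbb{E}[1/\chi^2_s]=1/(s-2)$, not $s/(s-2)$.
\item As $s\to\infty$ the factor must tend to $1$, whereas the stated factor tends to $2$. The reason is that the sketched minimizer is invariant under rescaling $\mathbf{S}$, and $s^{-1/2}\mathbf{S}$ becomes an $\epsilon$-embedding of $\mathrm{range}([\mathbf{A}\mathbf{Z}_k^{(p)},\,\mathbf{b}])$ with $\epsilon\to 0$, so \eqref{res_bound} forces the ratio to $1$.
\end{enumerate}

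When you write it up, state two hypotheses explicitly. First, require $s\ge k+2$: for $s\in\{k,k+1\}$ the expectation is infinite whenever $r_k^{\mathrm{opt}}>0$, so the stated condition $k\le s$ is too weak. Second, assume $\mathrm{rank}(\mathbf{A}\mathbf{Z}_k^{(p)})=k$, so that $\mathbf{Q}$ really has $k$ columns.
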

\begin{proof}
In the proof of Theorem~\ref{thm:deterministic_bound}, we showed that 
\[
(r_{k}^{\textrm{{sFLSQR}}})^2 =\|\mxA\modZ_k{\bfy}_{k}^{\text{sFLSQR}}-\bfb\|_2^2 = \|(\mxS\mxQ)^\dagger \mxS \mxQ_\perp \mxQ_\perp^{\!\top} \bfb\|_2^2 + \|\mxQ_\perp^{\!\top} \bfb\|_2^2,
\]
where $\mxQ = \mathrm{orth}(\mxA\mxZ)$ and $\mxQ_\perp$ denotes its orthogonal complement. 
Since $\mxS$ is Gaussian and the product between a Gaussian matrix and an orthonormal matrix is Gaussian, both $\mxS\mxQ$ and $\mxS\mxQ_\perp$ are Gaussians. Moreover, \( \mxS\mxQ\) is full rank with probability \(1\), so \((\mxS\mxQ)^\dagger\) is almost surely well-defined and, since $\mxQ$ and $\mxQ_\perp$ have orthogonal ranges, the matrices $\mxS\mxQ$ and $\mxS\mxQ_\perp$ are also independent. Then, by \cite[Proposition 10.1]{hmt} 
\[
\mathbb{E}\left[\|(\mxS\mxQ)^\dagger \mxS \mxQ_\perp \mxQ_\perp^{\!\top} \bfb\|_2^2\right] = \mathbb{E}\left[\|(\mxS \mxQ)^\dagger \|_F^2\right]\|\mxQ_\perp^{\!\top} \bfb\|_2^2,
\]
and, by \cite[Proposition 10.2]{hmt}, 
\[
\mathbb{E}\left[\|(\mxS \mxQ)^\dagger \|_F^2\right] = \frac{s}{s-k-1}.
\]
The claim follows by noting that, as shown in \eqref{eq: FLSQRres}, $\|\mxQ_\perp^{\!\top} \bfb\|_2 = r_{k}^\mathrm{opt}$. 
\end{proof}

\begin{remark} Although Theorem~\ref{thm:deterministic_bound} and Corollary~\ref{thm:probabilistic_bound_LSQR} are stated for approximate solutions to \eqref{eq:inverseproblem} 
belonging to the subspace $\modZ_k$ generated by $k$ steps of the FGK factorization \eqref{eq: FGK}, they extend to any comparison of residuals within the same approximation subspace.
\end{remark}

We conclude this section with a few illustrations that highlight the differences between the residuals attained by sFLSQR and sFLSMR 
and assess the sharpness of the bounds derived in Theorem~\ref{thm:deterministic_bound}. The experimental setup is the same as Section~\ref{par:setup}, i.e., we take $\bfA$ of size $1024\times 512$ with decaying singular values of the form \eqref{ill_decay_sv} (with $n=512$) and different values of $\rho$ (to explore different decay rates); the solution $\bfx_{\true}$ is chosen as the constant vector of ones and the problem is normalized so that $\|\bfb_{\true}\|_2=1$; the sketching is Gaussian, with $s = 2k+ 1$ rows, where $k$ is the maximum
number of iterations. We run the flexible solvers without truncation, so that the basis generated by all the solvers coincides with the LSQR one and, as done earlier in the same setting, we use the acronym sLSQR and sLSMR in place of sFLSQR and sFLSMR, respectively.

\begin{figure}
    \centering
    \begin{tikzpicture}

\begin{groupplot}[
    group style={
        group size=2 by 1,
        horizontal sep=1.6cm
    },
    ylabel style={yshift=-1em},
    width=6.7cm,
    height=5cm,
    xlabel={Iterations},
    ylabel={Relative residual},
    grid=both,
    tick label style={font=\small},
    label style={font=\small},
    legend style={
        font=\small,
        at={(1.0,1.05)},
        anchor=south east,
        legend columns=2
    }
]

\node[align=center] at ($(group c1r1.north)!0.5!(group c2r1.north) + (0,1.6cm)$)
{Noise level: 10\%, Decay: $\rho = 1.15$};

\nextgroupplot[
    title={}
]
\addplot+[mark = *, mark size = 1.5pt, mark repeat = 2, very thick,
        blue!75!white]
table[
    x=iter,
    y=r_flsqr,
    col sep=space
]{convergence_nl_10_rho_08.dat};
\addlegendentry{opt}

\addplot+[mark = triangle*, mark size = 1.5pt, mark repeat = 2, very thick,
        red!75!white]
table[
    x=iter,
    y=r_sflsqr,
    col sep=space
]{convergence_nl_10_rho_08.dat};
\addlegendentry{sLSQR}

\addplot+[mark = square*, mark size = 1.5pt, mark repeat = 2, very thick, mark options={fill=green!60!black, draw=green!60!black}, green!60!black]
table[
    x=iter,
    y=r_sflsmr,
    col sep=space
]{convergence_nl_10_rho_08.dat};
\addlegendentry{sLSMR}

\nextgroupplot[
    title={},
    ymode=log
]
\addplot+[mark = triangle*, mark size = 1.5pt, mark repeat = 2, very thick,
        red!75!white]
table[
    x=iter,
    y=r_sflsqr,
    col sep=space
]{convergence_nl_10_rho_08.dat};
\addlegendentry{sLSQR}

\addplot+[mark = square*, mark size = 1.5pt, mark repeat = 5, very thick, mark options={fill=green!60!black, draw=green!60!black}, green!60!black]
table[
    x=iter,
    y=r_sflsmr,
    col sep=space
]{convergence_nl_10_rho_08.dat};
\addlegendentry{sLSMR}

\addplot+[mark = triangle*, mark size = 1.5pt, mark repeat = 5, very thick, dashed, red!75!white]
table[
    x=iter,
    y=bound_sflsqr,
    col sep=space
]{convergence_nl_10_rho_08.dat};
\addlegendentry{\eqref{bound1}}

\addplot+[mark = square*, mark size = 1.5pt, mark repeat = 5, very thick, dashed, green!60!black, mark options={fill=green!60!black, draw=green!60!black}, green!60!black]
table[
    x=iter,
    y=bound_sflsmr,
    col sep=space
]{convergence_nl_10_rho_08.dat};
\addlegendentry{\eqref{bound2}}

\end{groupplot}

\end{tikzpicture}
    \caption{Artificial test problem described in Section~\ref{sec:residual-size}, with singular values decay as in \eqref{ill_decay_sv}. Left frame: optimal, sLSQR, and sLSMR  residual norms (in the same approximation subspace) versus iteration count. Right frame: sLSQR, and sLSMR  residual norms and their bounds given in Theorem ~\ref{thm:deterministic_bound} versus iteration count.}
    \label{fig:rho_0.8}
\end{figure}

\begin{figure}
    \centering
\begin{tikzpicture}

\begin{groupplot}[
    group style={
        group size=2 by 1,
        horizontal sep=1.75cm
    },
    ylabel style={yshift=-0.5em},
    width=6.5cm,
    height=5cm,
    xlabel={$\rho$},
    ylabel={Relative residual},
    grid=both,
    tick label style={font=\small},
    label style={font=\small},
    title style={font=\small},
    legend style={
        font=\small,
        at={(1.0,1.05)},
        anchor=south east,
        legend columns=2
    }
]

\node[align=center] at ($(group c1r1.north)!0.5!(group c2r1.north) + (0,1.6cm)$)
{Noise level: 10\%, Various $\rho$, $20$ iterations};

\nextgroupplot[
    ymin=0.09,
    ymax=0.15
]
\addplot+[mark = *, mark size = 1.5pt, mark repeat = 5, very thick,
        blue!75!white]
table[
    x=rho,
    y=r_rho_flsqr,
    col sep=space
]{convergence_nl_10_various_rho.dat};
\addlegendentry{opt}

\addplot+[mark = triangle*, mark size = 1.5pt, mark repeat = 5, very thick,
        red!75!white]
table[
    x=rho,
    y=r_rho_sflsqr,
    col sep=space
]{convergence_nl_10_various_rho.dat};
\addlegendentry{sLSQR}

\addplot+[mark = square*, mark size = 1.5pt, mark repeat = 5, very thick, mark options={fill=green!60!black, draw=green!60!black}, green!60!black]
table[
    x=rho,
    y=r_rho_sflsmr,
    col sep=space
]{convergence_nl_10_various_rho.dat};
\addlegendentry{sLSMR}

\nextgroupplot[
    ymode=log,
    ymin=0.09,
    ymax=10.22
]
\addplot+[mark = triangle*, mark size = 1.5pt, mark repeat = 5, very thick,
        red!75!white]
table[
    x=rho,
    y=r_rho_sflsqr,
    col sep=space
]{convergence_nl_10_various_rho.dat};
\addlegendentry{sLSQR}

\addplot+[mark = square*, mark size = 1.5pt, mark repeat = 5, very thick,
        mark options={fill=green!60!black, draw=green!60!black}, green!60!black]
table[
    x=rho,
    y=r_rho_sflsmr,
    col sep=space,
    mark options={fill=green!60!black, draw=green!60!black}, green!60!black
]{convergence_nl_10_various_rho.dat};
\addlegendentry{sLSMR}

\addplot+[mark = triangle*, mark size = 1.5pt, mark repeat = 5, very thick, dashed, red!75!white]
table[
    x=rho,
    y=bound_rho_sflsqr,
    col sep=space
]{convergence_nl_10_various_rho.dat};
\addlegendentry{\eqref{bound1}}

\addplot+[mark = square*, mark size = 1.5pt, mark repeat = 5, very thick, dashed, mark options={fill=green!60!black, draw=green!60!black}, green!60!black]
table[
    x=rho,
    y=bound_rho_sflsmr,
    col sep=space
]{convergence_nl_10_various_rho.dat};
\addlegendentry{\eqref{bound2}}

\end{groupplot}

\end{tikzpicture}    
    \caption{Class of artificial test problems obtained from the one described in  Section~\ref{sec:residual-size}, with singular values decay as in \eqref{ill_decay_sv} and different $\rho$ values. Left frame:  optimal, 
    sLSQR, and sLSMR  residual norms in the same approximation subspace of dimension 20, 
    versus $\rho$. Right frame:  sLSQR, and sLSMR  residual norms at the 20th iteration and their bounds given in Theorem ~\ref{thm:deterministic_bound}, versus $\rho$.}
    \label{fig:various_rho}
\end{figure}

In Figure~\ref{fig:rho_0.8} we fix the singular value decay rate and run 
sLSQR and sLSMR.
On the left, we plot the optimal residual norm (opt) attainable 
within the considered approximation subspace $\bfZ_k^{(p)}$ (coinciding, under our assumptions, with the LSQR residual norm), 
and the one obtained by sLSQR and 
sLSMR, varying $k$. This plot is similar to the 
one already shown in Figure~\ref{fig:artificial-01-sflsmr}. 
On the right, 
we plot the residuals of the two methods 
and compare them with the 
upper bounds from Theorem~\ref{thm:deterministic_bound}. Apart from observing again that, as the iteration evolve, the sLSMR residual norm follows the optimal one more closely than sLSQR, we can also appreciate that the bound \eqref{bound2} consistently follows the sLSMR residual along the iterations; the same is not true for the bound \eqref{bound1} and the sLSQR residual. In Figure~\ref{fig:various_rho}, instead of evaluating the residual at each iteration, we perform 20 iterations (fixed for every solver) and vary the value of $\rho$ from 1.01 to 1.15. Looking at both frames, we can positively state once more that the sLSMR residual norm more closely follows the optimal one, and its bounds \eqref{bound2} are tighter, with respect to their sLSQR counterparts and across the whole range of $\rho$ values. Heuristically, the different behavior of the bounds in Theorem~\ref{thm:deterministic_bound} can be motivated by noting that $\bfA$ appears explicitly only in the sFLSMR bound \eqref{bound2}, implying a tighter bound when the singular value decay is quicker (larger $\rho$'s); see also Remark~\ref{rem:spectral_decay_bounds}. Moreover, we generally observe a decrease in all the residual norm values as $\rho$ increases shortly after $\rho=1.05$. This is because, as already commented in the previous sections, the relative LSQR residual norm tends to stabilize around the noise level, which happens within 20 iterations only for problems with a quicker singular value decay. 

\subsection{Sketch selection}
\label{sec:sketching}

The theory of randomized oblivious embeddings 
enables to choose a random matrix $\bfS \in \mathbb R^{s \times n}$ such that, 
for vectors $\bfv$ belonging to a certain subspace $\mathcal V$, the 
$\epsilon$-embedding property
$(1 - \epsilon) \| \bfv \| \leq \| \bfS\bfv \| \leq (1 + \epsilon) \| \bfv \|_2$
holds with high probability. 
The literature offers several options for sketching operators, but the specific choice remains largely irrelevant to the results in this paper. We recall that Gaussian sketches offer the strongest theoretical guarantees but suffer from high computational overhead due to their unstructured nature. Other common choices, such as subsampled trigonometric transforms or sparse arrays, provide better computational performance but weaker theoretical bounds. We used Gaussian sketching for the artificial examples in Sections~\ref{sec:residual-size} and \ref{sec:sflsmr}, but we will use Countsketch for all subsequent numerical experiments. We refer the interested reader to \cite{martinsson2020randomized} for a
recent overview on this topic.

\section{Numerical experiments}
\label{sec:numexp}

This section is devoted to demonstrate the effectiveness
of sFLSQR and sFLSMR in the context of imaging inverse 
problems. 
The ``flexibility'' in the Krylov method is used either to impose a 
low-rank structure in the solution by truncating the basis vectors 
at each step or to perform matvec products with an unmatched transpose. In all the experiments,  incomplete orthogonalization is limited to the last $\ell=2$ vectors.

Our codes are available 
online at 
\texttt{https://github.com/robol/sFLSQR}; the experiments are
provided in the form of Julia notebooks
running on Julia 1.11.7. 
All tests
have been run on a 
system with an 
AMD Ryzen 7 3700X 8-Core Processor, 
32 GB of RAM, and 
a NVIDIA GeForce 1030 GT GPU
(the latter 
is used only 
in the CT scan examples
of Section~\ref{sec:numexp-astra}). 

\begin{figure}
    \centering
    \begin{tikzpicture}

\begin{axis}[
    width=9cm,
    height=5cm,
    ybar,
    bar width=18pt,
    ylabel={seconds},
    grid=both,
    ymin=0,
    symbolic x coords={
        LSQR,
        FLSQR,
        sFLSQR,
        sFLSMR,
        sFLSQR-RND,
        sFLSMR-RND
    },
    xtick=data,
    x tick label style={
        rotate=25,
        anchor=east,
        font=\small
    },
    tick label style={font=\small},
    label style={font=\small},
    ylabel style={yshift=-0.25cm},
    legend style={
        font=\small,
        at={(1.05,1.0)},
        anchor=north west,
        rotate = 0
    }
]

\addplot+[
    ybar,
    fill,
    draw=black,
    thick,
    fill=red!50!white
]
table[
    x=method,
    y=time,
    col sep=space
]{paper_02_times.dat};
\addlegendentry{Time (s)}

\addplot+[
    sharp plot,
    mark=none,
    very thick,
    dashed,
    blue!65!white,
    legend image code/.code={
        \draw[blue!65!white, very thick, dashed] (0cm,0cm) -- (0.5cm,0cm);
    }
]
table[
    x=method,
    y=lsqr_baseline,
    col sep=space
]{paper_02_times.dat};
\addlegendentry{LSQR}

\end{axis}

\end{tikzpicture}
    \caption{Image deblurring and inpainting test problem. Timings for running $50$ iterations of LSQR, and its flexible and sketched counterparts. The flexible variants use low-rank truncation with a fixed rank of $30$, and we rely on 
    CountSketch for the sketching. The methods with label ending in
    RND use a randomized SVD instead of the usual truncated SVD to perform low-rank truncation. }
    \label{fig:cameraman-timings}
\end{figure}

\begin{figure}
    \centering
    \includegraphics[width=0.85\linewidth]{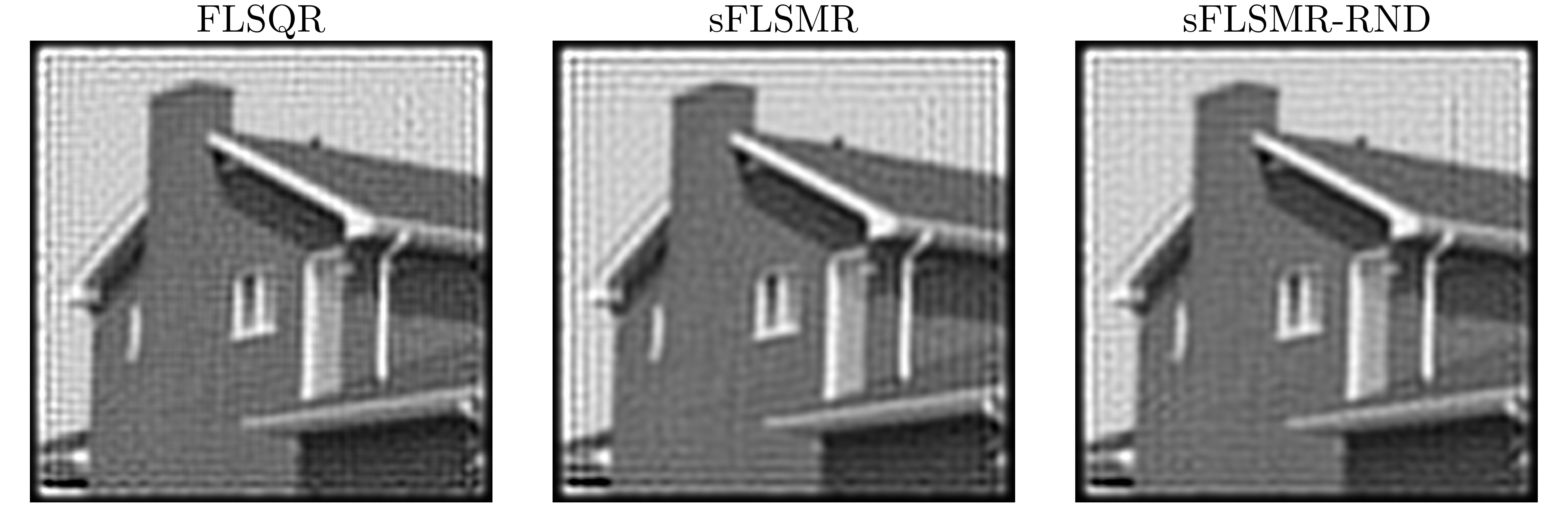}
    \caption{Image deblurring and inpainting test problem. Images recovered, with FLSQR and sFLSMR using the standard and 
      randomized SVD, at the end of the iterations.}
    \label{fig:cameraman-rnd}
\end{figure}

\begin{figure}
    \centering
\begin{tikzpicture}

\begin{groupplot}[
    group style={
        group size=2 by 1,
        horizontal sep=1.75cm
    },
    ylabel style={yshift=-0.75em},
    width=6.5cm,
    height=5cm,
    xlabel={Iterations},
    grid=both,
    tick label style={font=\small},
    label style={font=\small},
    title style={font=\small},
    yticklabel style={
        /pgf/number format/fixed,
        /pgf/number format/precision=2,
    },    
    legend style={
        font=\small,
        at={(0.95,0.95)},
        anchor=north east,
        legend columns=1
    }
]

\nextgroupplot[
    ylabel={Relative residual}
]
\addplot+[mark = *, mark size = 1.5pt, mark repeat = 3, very thick,
    blue!75!white]
table[
    x=iter,
    y=flsqr,
    col sep=space
]{cameraman_rnd_residuals.dat};
\addlegendentry{FLSQR}

\addplot+[mark = square*, mark size = 1.5pt, mark repeat = 3, very thick,
    mark options={fill=green!60!black, draw=green!60!black}, green!60!black]
table[
    x=iter,
    y=sflsmr,
    col sep=space
]{cameraman_rnd_residuals.dat};
\addlegendentry{sFLSMR}

\addplot+[mark = diamond*, mark size = 1.5pt, mark repeat = 3, very thick, 
    purple!60!white]
table[
    x=iter,
    y=sflsmr_rnd,
    col sep=space
]{cameraman_rnd_residuals.dat};
\addlegendentry{sFLSMR-RND}

\nextgroupplot[
    ylabel={Relative error}
]
\addplot+[mark = *, mark size = 1.5pt, mark repeat = 3, very thick, blue!75!white]
table[
    x=iter,
    y=flsqr,
    col sep=space
]{cameraman_rnd_errors.dat};
\addlegendentry{FLSQR}

\addplot+[mark = square*, mark size = 1.5pt, mark repeat = 3, very thick, mark options={fill=green!60!black, draw=green!60!black}, green!60!black]
table[
    x=iter,
    y=sflsmr,
    col sep=space
]{cameraman_rnd_errors.dat};
\addlegendentry{sFLSMR}

\addplot+[mark = diamond*, mark size = 1.5pt, mark repeat = 3, very thick, purple!60!white]
table[
    x=iter,
    y=sflsmr_rnd,
    col sep=space
]{cameraman_rnd_errors.dat};
\addlegendentry{sFLSMR-RND}

\end{groupplot}

\end{tikzpicture}    
    \caption{Image deblurring and inpainting test problem. Convergence history for residuals and errors computed by FLSQR and sFLSMR using the standard and 
      randomized SVD.}
    \label{fig:cameraman-convergence}
\end{figure}

\subsection{Image deblurring and inpainting}
As a first test case, we consider the  \texttt{house} test image of size $512\times 512$ pixels, with
the same blurring and subsampling considered in Section~\ref{sect:intro}. 
%
We test the computational complexity, and we compare the 
runtime, of the following methods, for a fixed number of 
$50$ iterations:
\begin{description}
    \item \textit{LSQR} The LSQR implementation
      from the \texttt{IterativeSolvers.jl} Julia package\footnote{We have also tested the timings with our own implementation of LSQR, to make sure that the comparison was as fair as possible, and there were no 
      appreciable differences.}.
    \item \textit{FLSQR} Our own flexible LSQR implementation, using 
      low-rank truncation \linebreak[4]$\tau_r(\bfv)$ \eqref{def:lowr-trc} applied to the basis vectors, with truncation rank $r=30$; this is the approach suggested in \cite{gazzola-lowrank}.
    \item \textit{sFLSQR} The sketched version of flexible LSQR described 
      in Section~\ref{sec:sflsqr}, using 
      CountSketch as sketching. The sketching size is 
      equal to 101 (i.e., twice the maximum number of iterations plus one).
    \item \textit{sFLSMR} The sketched version of flexible LSMR described in Section~\ref{sec:sflsmr}, with the same setup of sFLSQR.
    \item \textit{sFLSQR-RND} The sketched version of flexible LSQR, using the 
      Randomized SVD 
      by \cite{hmt} instead of a full truncated 
      SVD to perform low-rank truncation of the basis vectors.
    \item \textit{sFLSMR-RND} The same as above, but with flexible LSMR.
\end{description}

The timings are reported in the bar plot of Fig.~\ref{fig:cameraman-timings}. As expected, LSQR delivers good performances, and FLSQR is 
slower, mainly due to the full reorthogonalization. The sketched 
methods are significantly more competitive, and require a similar 
computational effort. When exploiting the randomized SVD, the
computational time is essentially the same as that of the standard LSQR.

We next compare the performance in terms of (quality of the) reconstructions for FLSQR, sFLSQR and sFLSQR-RND. 
As visible from Fig.~\ref{fig:cameraman-rnd} (reporting reconstructions) and Fig.~\ref{fig:cameraman-convergence} (reporting relative residuals and errors histories), the results
obtained with the standard and randomized SVD are very close. 
The sketching and randomization in the SVD only come at a small loss 
in accuracy at the end of the convergence history. 

\subsection{Computed Tomography (CT)}
\label{sec:numexp-astra}
As mentioned in Section \ref{sect:intro}, when dealing with large-scale CT 
problems, an efficient backprojection (i.e., multiplication by the transpose of the forward operator) may only be available 
approximately due to the organization of data structures in the GPU; see \cite{astra}.
 Typically, commonly used algebraic iterative methods (such as Landweber) may not converge in this situation, although one can introduce nontrivial modifications to ensure convergence to the solution of a slightly perturbed problem; see \cite{CT2}. Quite recently, the authors of \cite{CT1} propose to handle $\bfA^\sharp \approx \bfA^{\!\top}$ 
with Krylov methods that do not rely on $\bfA^{\!\top}$, such as AB-GMRES and BA-GMRES; see also \cite{ABBAGMRES}. 

Here we consider solving CT problems with unmatched projector/backprojector pairs via FLSQR and FLSMR, which are rooted in the FGK factorization \eqref{eq:flsqr-relations}, and therefore extend LSQR and LSMR to situations where $\bfA^\sharp\bfA$ and $\bfA\bfA^\sharp$ are not symmetric by performing full orthogonalization of the basis vectors. If we consider no (further) modifications to the solution space basis vectors (i.e., we take $\tau(\bfv)=\bfv$ in \eqref{eq: modbasis}), then both FLSQR and FLSMR build the Krylov subspace $\calK_k(\bfA^\sharp \bfA, \bfA^\sharp \bfb)$ for the approximation of a solution and are mathematically equivalent to AB-GMRES and BA-GMRES, respectively. Still assuming $\tau(\bfv)=\bfv$, the sketched version of these solvers, i.e., sFLSQR and sFLSMR, are equivalent to sketched AB-GMRES and sketched BA-GMRES, respectively.  
In the following we experimentally show that switching to sFLSQR and sFLSMR allows to work with a non-orthogonal basis for $\calK_k(\bfA^\sharp \bfA, \bfA^\sharp \bfb)$ and still solve associated minimization problems accurately and at a low cost. Even if not investigated here, this holds also in cases where, in addition to introducing flexibility to handle $\bfA^\sharp\approx\bfA^{\!\top}$, one considers $\tau(\bfv)\neq \bfv$ to enforce additional regularity into the solution.

The following tests employ the 
\emph{ASTRA Toolbox}, an open source 
package implementing high-performance GPU primitives for 2D and 3D tomography \cite{astra}, to generate unmatched transposes $\bfA^\sharp$.


\paragraph{2D CT problem} 
 We consider the 2D $1024 \times 1024$ 
phantom generated with 
the \texttt{shepp\_logan} function of the \emph{ASTRA Toolbox}, and simulate a 
CT acquisition with parallel geometry using $5600$ rays at $180$ equispaced angles between 0 and 180 degrees. This produces a least squares problem \eqref{eq:inverseproblem} with coefficient matrix of size 
$1008000 \times 1048576$, which is slightly underdetermined. 
The right hand side vector is polluted with Gaussian noise of level $5\%$.

We emphasize again that, when using the GPU operations in the \emph{ASTRA Toolbox}, the backprojection
is only an approximation of the transpose operator, so performing 
LSQR is not directly possible, and one has to resort to  
FLSQR even if no basis vector truncation is involved. We compare the results of running FLSQR, and our versions
of sFLSQR and sFLSMR. In our tests we have verified that, given random vectors $\bfx$ and $\bfy$ of unit 2-norm, 
we have
\[
| \bfx^{\!\top} \bfA \bfy - \bfy^{\!\top} \bfA^\sharp \bfx| \approx 4 \cdot 10^{-2}\,.
\]

The recovered solutions at iteration $15$ for both FLSQR and sFLSMR are displayed in Fig.~\ref{fig:astra-2d-reconstruction}. The residual and error plots are reported in Fig.~\ref{fig:astra-2d-convergence}. These results demonstrate that the presented methods are competitive with FLSQR$\!$;  
in particular, sFLSMR is reliable and robust, and delivers
residuals in line with the deterministic methods, as well as the best error
among all techniques. 

\begin{figure}
    \centering
    \includegraphics[width=0.9\linewidth]{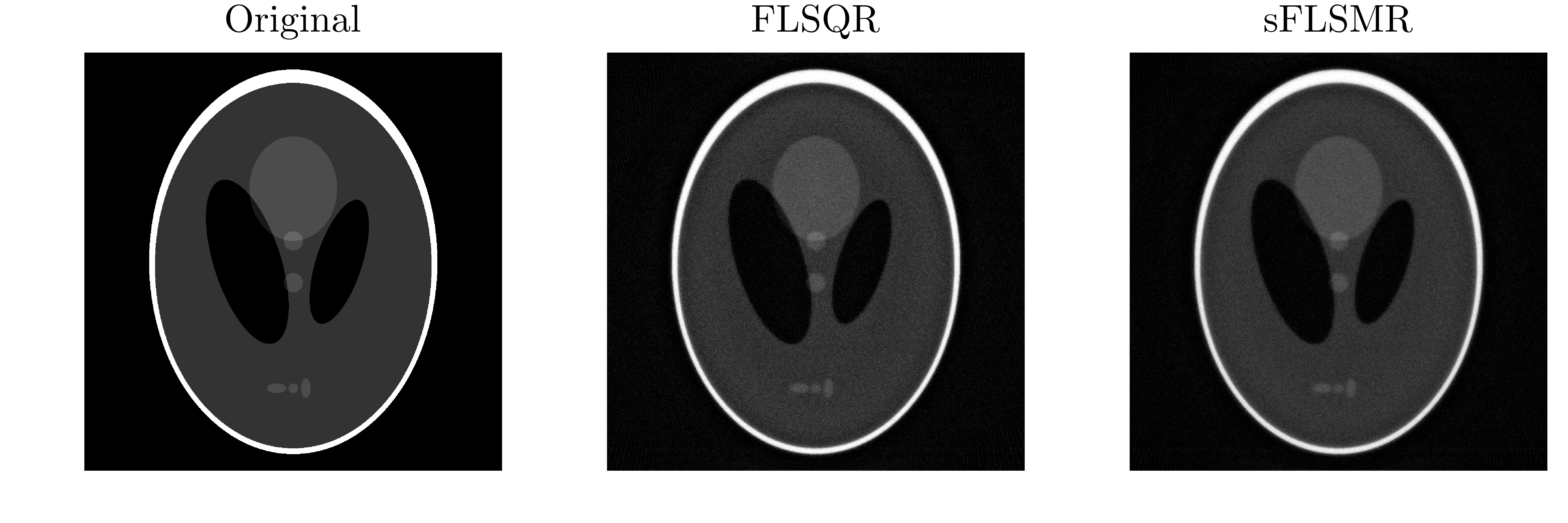}
    \caption{2D CT test problem. Original phantom; reconstructed phantoms at the 15th iteration of both the FLSQR and the sFLSMR solvers.} 
    \label{fig:astra-2d-reconstruction}
\end{figure}

\begin{figure}
    \centering
\begin{tikzpicture}

\begin{groupplot}[
    group style={
        group size=2 by 1,
        horizontal sep=1.6cm
    },
    ylabel style={yshift=-1em},
    width=6.7cm,
    height=5cm,
    xlabel={Iterations},
    grid=both,
    tick label style={font=\small},
    label style={font=\small},
    title style={font=\small},
    legend style={
        font=\small,
        at={(0.95,0.95)},
        anchor=north east,
        legend columns=1
    }
]

\nextgroupplot[
    title={Residual norm},
    ylabel={Relative residual}
]
\addplot+[mark=none, very thick,
    blue!75!white, mark = *, mark size = 1.5pt, mark repeat = 2]
table[
    x=iter,
    y=flsqr,
    col sep=space
]{astra_2D_residuals.dat};
\addlegendentry{FLSQR}

\addplot+[mark=none, very thick,
    red!75!white, mark = triangle*, mark size = 1.5pt, mark repeat = 2]
table[
    x=iter,
    y=sflsqr,
    col sep=space
]{astra_2D_residuals.dat};
\addlegendentry{sFLSQR}

\addplot+[mark=none, very thick, mark options={fill=green!60!black, draw=green!60!black}, green!60!black, mark = square*, mark size = 1.5pt, mark repeat = 2]
table[
    x=iter,
    y=sflsmr,
    col sep=space
]{astra_2D_residuals.dat};
\addlegendentry{sFLSMR}

\nextgroupplot[
    title={Error norm},
    ylabel={Relative error},
    legend style={
        font=\small,
        at={(0.05,0.95)},
        anchor=north west,
        legend columns=1
    }
]
\addplot+[mark size = 1.5pt, very thick,
    blue!75!white, mark = *, mark repeat = 2]
table[
    x=iter,
    y=flsqr,
    col sep=space
]{astra_2D_errors.dat};
\addlegendentry{FLSQR}

\addplot+[mark size = 1.5pt, mark = triangle*, 
    very thick,
    red!75!white, mark repeat = 2]
table[
    x=iter,
    y=sflsqr,
    col sep=space
]{astra_2D_errors.dat};
\addlegendentry{sFLSQR}

\addplot+[mark size = 1.5pt, very thick, mark options={fill=green!60!black, draw=green!60!black}, green!60!black, mark = square*, mark repeat = 2]
table[
    x=iter,
    y=sflsmr,
    col sep=space
]{astra_2D_errors.dat};
\addlegendentry{sFLSMR}

\end{groupplot}

\end{tikzpicture}    
    \caption{2D CT test problem. Residual and error plots versus iteration count. 
    }
    \label{fig:astra-2d-convergence}
\end{figure}

The running times for $30$ iterations of 
FLSQR is of $6.78$ seconds, while 
sFLSQR and sFLMR both require around $3.25$ seconds. 
Running LSQR ignoring the fact that the transpose is approximated
requires $1.7$ seconds. We remark that, for this test case, 
the approximation in the transpose is accurate enough that 
running unmodified LSQR is indeed a viable option, and the 
results obtained in this way are typically good reconstructions. 
However, the fact that sketched flexible methods can be competitive
opens the door to performing even more aggressive approximation 
for the backprojection operator. 

\paragraph{3D CT problem} 
We repeat a similar test in the 3D case, using a 3D version of the Shepp-Logan phantom of size $256 \times 256 \times 256$, and 
using the GPU implementation of the transpose, both available within the \emph{ASTRA Toolbox}. During 
our tests, we have verified that, given random vectors $\bfx$ and $\bfy$ of unit 2-norm, 
\[
\text{$| \bfx^{\!\top} \bfA \bfy - \bfy^{\!\top} \bfA^\sharp \bfx|$ is between $10^{-3}$
and $10^{-2}$.}
\]
Therefore the approximate transpose is quite accurate, 
but not as the machine precision of $\approx 10^{-7}$ would 
require (this example is in single precision). The 3D CT parallel rays scanning geometry is defined by a  
mesh of $100 \times 100$
detectors and $120$ equispaced angles between $0$ and $180$ degrees. Hence, the coefficient matrix for the least square problem in \eqref{eq:inverseproblem} has size $1200000 \times 16777216$ and is fairly undetermined.
The noise level in this test is $10\%$.

\begin{figure}
    \centering
    \includegraphics[width=0.9\linewidth]{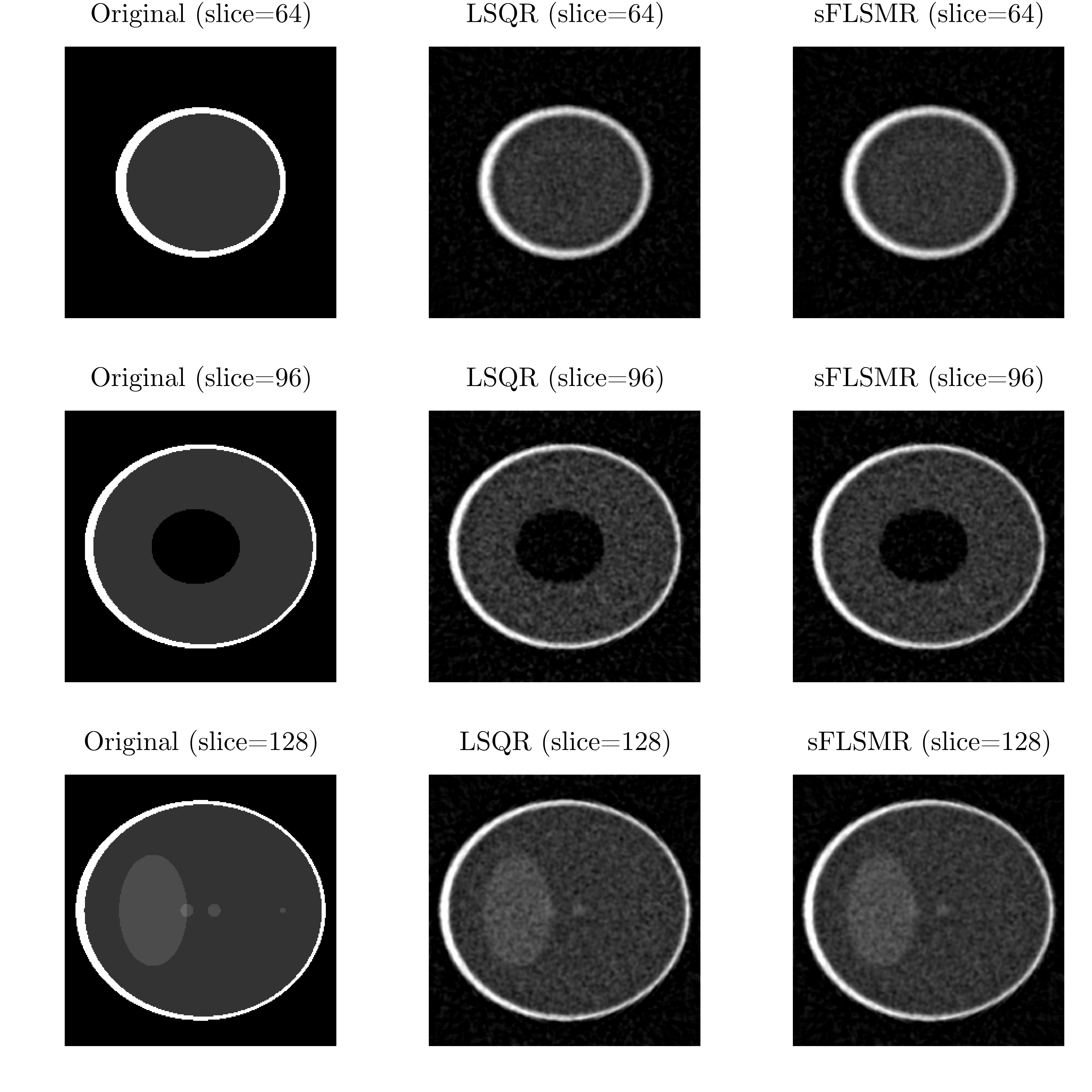}
    \caption{3D CT test problem. $3$ slices (with index $64$, $96$, and $128$, out of $256$) of the original and reconstructed 3D Shepp-Logan phantom; the reconstructions are obtained via 
    $12$ iterations of LSQR and sFLSMR. 
    }
    \label{fig:astra-3d-reconstruction}
\end{figure}

\begin{figure}
    \centering
    \begin{tikzpicture}
    
    \begin{groupplot}[
        group style={
            group size=2 by 1,
            horizontal sep=1.6cm
        },
        ylabel style={yshift=-1em},
        width=6.7cm,
        height=5cm,
        xlabel={Iterations},
        grid=both,
        tick label style={font=\small},
        label style={font=\small},
        title style={font=\small},
        legend style={
            font=\small,
            at={(0.95,0.95)},
            anchor=north east,
            legend columns=1
        }
    ]
    
    \nextgroupplot[
        title={Residual norm},
        ylabel={Relative residual}
    ]
    \addplot+[mark = *, mark size = 1.5pt, mark repeat = 2, very thick,
    blue!75!white]
    table[
        x=iter,
        y=lsqr,
        col sep=space
    ]{astra_3D_residuals.dat};
    \addlegendentry{LSQR}
    
    \addplot+[mark = triangle*, mark size = 1.5pt, mark repeat = 2, very thick,
    red!75!white]
    table[
        x=iter,
        y=sflsqr,
        col sep=space
    ]{astra_3D_residuals.dat};
    \addlegendentry{sFLSQR}
    
    \addplot+[mark = square*, mark size = 1.5pt, mark repeat = 2, very thick, mark options={fill=green!60!black, draw=green!60!black}, green!60!black]
    table[
        x=iter,
        y=sflsmr,
        col sep=space
    ]{astra_3D_residuals.dat};
    \addlegendentry{sFLSMR}
    
    \nextgroupplot[
        title={Error norm},
        ylabel={Relative error}
    ]
    \addplot+[mark = *, mark size = 1.5pt, mark repeat = 5, very thick,
    blue!75!white]
    table[
        x=iter,
        y=lsqr,
        col sep=space
    ]{astra_3D_errors.dat};
    \addlegendentry{LSQR}
    
    \addplot+[mark = triangle*, mark size = 1.5pt, mark repeat = 2, very thick,
    red!75!white]
    table[
        x=iter,
        y=sflsqr,
        col sep=space
    ]{astra_3D_errors.dat};
    \addlegendentry{sFLSQR}
    
    \addplot+[mark = square*, mark size = 1.5pt, mark repeat = 2, very thick, mark options={fill=green!60!black, draw=green!60!black}, green!60!black]
    table[
        x=iter,
        y=sflsmr,
        col sep=space
    ]{astra_3D_errors.dat};
    \addlegendentry{sFLSMR}
    
    \end{groupplot}
    
    \end{tikzpicture}    
    \caption{3D CT test problem. Residual and error plots versus iteration count.}
    \label{fig:astra-3d-convergence}
\end{figure}

The recovered solutions (selected slices of the considered 3D phantom) for both LSQR and sFLSMR are displayed in Fig.~\ref{fig:astra-3d-reconstruction}. The residual and error plots for LSQR, sFLSQR and sFLSMR are reported in Fig.~\ref{fig:astra-3d-convergence}. These results are somewhat similar to the ones obtained for the 2D case, and demonstrate that the presented methods are competitive with LSQR.  
In particular, sFLSMR is reliable and robust: it delivers the lowest relative error and residuals, allowing the latter to decrease, while the LSQR and sFLSQR ones somewhat stagnate. Although not much improvements in the reconstruction quality are visible looking at the phantom slices in Fig.~\ref{fig:astra-3d-reconstruction}, sketching in the unmatched-transpose-aware sFLSMR method seems to positively affect the quality of the residual approximation (recall that, since $\bfA^\sharp\neq\bfA^{\!\top}$, LSQR does not return the exact residual norm). This may impact the quality of the computed reconstructions if the iterative solvers are automatically stopped by, say, the discrepancy principle. Indeed, looking at Fig.~\ref{fig:astra-3d-convergence}, we can clearly see that the LSQR relative residual norms seem to stabilize well above the noise level (10\%), even when the approximate solution is already starting to converge to an unregularized solution, leading to under-regularization; on the contrary, the sFLSMR residual hits the noise level at around iteration 9 or 10, i.e., just before the minimum relative error is computed and therefore before encountering semiconvergence, leading to slight over-regularization but a better-quality reconstruction than LSQR.

\section{Conclusions and outlook}
We have proposed two randomized flexible \linebreak[4]Krylov solvers, sFLSQR and sFLSMR, for large-scale least-squares problems. These methods combine the flexibility of the FGK-based solvers FLSQR and FLSMR with randomized sketching techniques, in order to alleviate the computational burden associated with the long recurrence relations required by flexible Krylov methods.

The main idea underlying sFLSQR and sFLSMR is to perform only partial orthogonalization when generating the basis vectors and to compute the solution updates by solving sketched and projected least-squares problems. This strategy significantly reduces the cost of orthogonalization and storage (thereby reducing computational time), while preserving the ability of flexible Krylov methods to incorporate structural information into the approximation subspace or to handle inexact applications of the transpose operator, which is particularly relevant when solving linear inverse problems. This is confirmed by numerical experiments on imaging problems. Additional randomization techniques, such as randomized low-rank approximations used in the basis truncation step, can be naturally accommodated, further reducing the computational cost. 
Our theoretical analysis establishes bounds that relate the residual norms obtained by the sketched methods to the optimal residual norms attainable in the same approximation subspace. 
Both the analysis and numerical experiments reveal that the two proposed solvers behave differently depending on the residual regime: namely, sFLSQR is effective when the residual is small (small noise level), while sFLSMR is more reliable when the residual is large (large noise level).


Several directions for future work remain open. First, a deeper theoretical understanding of the behavior of sFLSMR, particularly in the presence of rapidly decaying singular values, would help explain its favorable empirical performance. Second, it would be interesting to investigate adaptive strategies for selecting the sketch dimension and for choosing the operator $\tau$ that modifies the basis vectors, for instance by adapting the truncation rank during the iterations. Finally, extending the proposed framework to incorporate additional regularization mechanisms or hybrid approaches represents a promising direction for further improving the efficiency and robustness of flexible Krylov solvers for large-scale inverse problems.

\bibliographystyle{siamplain}
\bibliography{references}
\end{document}

%% file: shared.tex
\usepackage{lipsum}
\usepackage{amsfonts}
\usepackage{graphicx}
\usepackage{epstopdf}

\usepackage{pgfplots}
\usetikzlibrary{pgfplots.groupplots}
\usepackage{tkz-euclide}
\usepackage{tikz}
\usetikzlibrary{intersections}
\usepackage{siunitx}
\usepackage{subcaption}
\usepackage{algorithm}
\usepackage{algpseudocode}
\ifpdf
  \DeclareGraphicsExtensions{.eps,.pdf,.png,.jpg}
\else
  \DeclareGraphicsExtensions{.eps}
\fi

\renewcommand{\hat}{\widehat}

\newsiamremark{remark}{Remark}
\newsiamremark{hypothesis}{Hypothesis}
\crefname{hypothesis}{Hypothesis}{Hypotheses}
\newsiamthm{claim}{Claim}
\newsiamremark{fact}{Fact}
\crefname{fact}{Fact}{Facts}

\headers{Sketched FLSQR and sketched FLSMR}{A. Bucci, S. Gazzola, and L. Robol}

\title{Randomized Flexible LSQR and LSMR\\ with applications to inverse problems\thanks{\funding{AB, SG and RB are members of the INdAM Research Group GNCS. AB is supported by the UK's Engineering and Physical Sciences Research Council (EPSRC grant EP/Z533786/1). AB also acknowledges support from the semester program “Stochastic and Randomized Algorithms in Scientific Computing: Foundations and Applications” at Institute for Computational and Experimental Research in Mathematics (ICERM), during which part of this work was completed. The work of SG was partially supported by the Italian Ministry of
University and Research (MUR) through the PRIN 2022 “Low-rank Structures and Numerical Methods in Matrix and Tensor Computations and their Application” code: 20227PCCKZ MUR
D.D. financing decree n. 104 of February 2nd, 2022 (CUP I53D23002280006). The work of 
LR was partially supported by the Italian Ministry of University and Research (MUR) through the PRIN 2022 ``MOLE: Manifold constrained Optimization and LEarning'',  code: 2022ZK5ME7 MUR D.D. financing decree n. 20428 of November 6th, 2024 (CUP I53C24002260006). The work of SG and LR was partially supported by MIUR Excellence Department Project awarded to the Department of Mathematics, University of Pisa (CUP I57G22000700001).}}}

\author{Alberto Bucci\thanks{School of Mathematics, The University of Edinburgh, Edinburgh, EH9 3FD, UK
  (\email{abucci2@ed.ac.uk}).}
\and Silvia Gazzola\thanks{Dipartimento di Matematica, Università di Pisa, Largo Bruno Pontecorvo 5, IT
  (\email{silvia.gazzola@unipi.it}, \email{leonardo.robol@unipi.it}).}
\and Leonardo Robol\footnotemark[3]}

\usepackage{amsopn}


%% file: references.bib
@string{sisc = "SIAM J. Sci. Comput."}

@article{gazzola-lowrank,
author = {Gazzola, Silvia and Meng, Chang and Nagy, James G.},
title = {{K}rylov Methods for Low-Rank Regularization},
journal = {SIAM Journal on Matrix Analysis and Applications},
volume = {41},
number = {4},
pages = {1477-1504},
year = {2020},
doi = {10.1137/19M1302727},
}

@article{LSMR,
author = {D.~C.~L.~Fong and M.~Saunders},
title = {{LSMR}: An iterative algorithm for sparse least-squares
problems},
journal = {SIAM J. Scientific Computing},
volume = {33},
pages = {2950–-2971},
year = {2011}
}

@article{martinsson2020randomized,
  title={Randomized numerical linear algebra: Foundations and algorithms},
  author={Martinsson, Per-Gunnar and Tropp, Joel A},
  journal={Acta Numerica},
  volume={29},
  pages={403--572},
  year={2020},
  publisher={Cambridge University Press}
}

@article{brown2025inner,
  title={Inner-Product Free {K}rylov Methods for Large-Scale Inverse Problems},
  author={Brown, Ariana N and Chung, Julianne and Nagy, James G and Sabat{\'e} Landman, Malena},
  journal={SIAM Journal on Scientific Computing},
  number={0},
  pages={S161--S182},
  year={2025},
  publisher={SIAM}
}

@article{FKSIRW,
issn = {1064-8275},
journal = sisc,
pages = {S47--S69},
year = {2021},
title = {Iteratively Reweighted {FGMRES} and {FLSQR} for Sparse Reconstruction},
author = {Gazzola, Silvia and Nagy, James G and Sabaté Landman, Malena}
}

@article{sabate2025randomized,
  title={Randomized and inner-product free {K}rylov methods for large-scale inverse problems},
  author={Sabat{\'e} Landman, Malena and Brown, Ariana N and Chung, Julianne and Nagy, James G},
  journal={Numerical Algorithms},
  pages={1--21},
  year={2025},
  publisher={Springer}
}

@article{chung2025randomized,
  title={Randomized {K}rylov methods for inverse problems},
  author={Chung, Julianne and Gazzola, Silvia},
  journal={arXiv preprint arXiv:2508.20269},
  year={2025}
}

@article{Saad1993,
author = {Saad, Y.},
title = {A Flexible Inner-Outer Preconditioned {GMRES} Algorithm},
journal = {SIAM J. Sci. Comput.},
volume = {14},
number = {2},
pages = {461-469},
year = {1993},
doi = {10.1137/0914028}
}

@article{sabate2025randomizednew,
title={Randomized flexible {K}rylov methods for $\ell_p$ regularization},
author = {Sabat{\'e} Landman, Malena and Nakatsukasa, Yuji},
journal={arXiv preprint arXiv:2302.13616},
  year={2025}
}

@article{chung2019,
  title={Flexible {K}rylov Methods for $\ell_p$ Regularization},
  author={Chung, J. and Gazzola, S.},
  journal=sisc,
  volume={41},
  number={5},
  pages={S149--S171},
  year={2019},
  publisher={SIAM}
}

@article{CT1,
title = {{GMRES} methods for tomographic reconstruction with an unmatched back projector},
journal = {Journal of Computational and Applied Mathematics},
volume = {413},
pages = {114352},
year = {2022},
author = {P.~C.~Hansen and K.~Hayami and K.~Morikuni},
}

@article{ABBAGMRES,
title = {{GMRES} methods for least squares problems},
journal = {SIAM J.~Matrix Anal.~Appl.},
volume = {31},
pages = {2400–-2430},
year = {2010},
author = {K.~Hayami and J.-F.~Yin and T.~Ito},
}

@article{CT2,
author = {Y.~Dong and P.~C.~Hansen and M.~E.~Hochstenbach and N.~A.~Brogaard Riis},
title = {Fixing Nonconvergence of Algebraic Iterative Reconstruction with an Unmatched Backprojector},
journal = {SIAM Journal on Scientific Computing},
volume = {41},
number = {3},
pages = {A1822-A1839},
year = {2019}
}

@article{paige1982lsqr,
  title={{LSQR}: An algorithm for sparse linear equations and sparse least squares},
  author={Paige, Christopher C and Saunders, Michael A},
  journal={ACM Transactions on Mathematical Software (TOMS)},
  volume={8},
  number={1},
  pages={43--71},
  year={1982},
  publisher={ACM New York, NY, USA}
}

@article{HnPlSt09,
author = "I.~Hn{\v{e}}tynkov{\'a} and M.~Ple{\v{s}}inger and Z.~Strakos",
title = "The regularizing effect of the {G}olub-{K}ahan iterative bidiagonalization and revealing the noise level in the data",
journal = "BIT",
volume = 49,
year = 2009,
pages = "669--696"
}

@article{GazzolaSilvia2016Iotd,
journal = {BIT Numerical Mathematics},
pages = {893--918},
volume = {56},
number = {3},
year = {2016},
title = {Inheritance of the discrete {P}icard condition in {K}rylov subspace methods},
author = {Gazzola, Silvia and Novati, Paolo}
}

@article{hmt, title={Finding Structure with Randomness: Probabilistic Algorithms for Constructing Approximate Matrix Decompositions}, volume={53}, DOI={10.1137/090771806}, number={2}, journal={SIAM Rev.}, author={Halko, N. and Martinsson, Per-Gunnar and Tropp, Joel A.}, year={2011}, pages={217–288}}

@article{nakatsukasa2024fast,
  title={Fast and accurate randomized algorithms for linear systems and eigenvalue problems},
  author={Nakatsukasa, Yuji and Tropp, Joel A},
  journal={SIAM Journal on Matrix Analysis and Applications},
  volume={45},
  number={2},
  pages={1183--1214},
  year={2024},
  publisher={SIAM}
}

@article{mahoney2011randomized,
  title={Randomized algorithms for matrices and data},
  author={Mahoney, Michael W and others},
  journal={Foundations and Trends{\textregistered} in Machine Learning},
  volume={3},
  number={2},
  pages={123--224},
  year={2011},
  publisher={Now Publishers, Inc.}
}

@article{woodruff2014sketching,
  title={Sketching as a tool for numerical linear algebra},
  author={Woodruff, David P and others},
  journal={Foundations and Trends{\textregistered} in Theoretical Computer Science},
  volume={10},
  number={1--2},
  pages={1--157},
  year={2014},
  publisher={Now Publishers, Inc.}
}

@article{balabanov2022randomized,
  title={Randomized {G}ram--{S}chmidt process with application to {GMRES}},
  author={Balabanov, Oleg and Grigori, Laura},
  journal={SIAM Journal on Scientific Computing},
  volume={44},
  number={3},
  pages={A1450--A1474},
  year={2022},
  publisher={SIAM}
}

@article{sadok1999cmrh,
  title={{CMRH}: A new method for solving nonsymmetric linear systems based on the {H}essenberg reduction algorithm},
  author={Sadok, Hassane},
  journal={Numerical Algorithms},
  volume={20},
  number={4},
  pages={303--321},
  year={1999},
  publisher={Springer}
}

@article{brown2025h,
  title={{H-CMRH}: An inner product free hybrid {K}rylov method for large-scale inverse problems},
  author={Brown, Ariana N and Sabat{\'e} Landman, Malena and Nagy, James G},
  journal={SIAM Journal on Matrix Analysis and Applications},
  volume={46},
  number={1},
  pages={232--255},
  year={2025},
  publisher={SIAM}
}

@book{hansen2010discrete,
  title={Discrete Inverse Problems: Insight and Algorithms},
  author={P.~C.~Hansen},
  year={2010},
  publisher={SIAM}
}

@article{ChungPalmer2015,
  author    = {Julianne Chung and Katrina Palmer},
  title     = {A Hybrid {LSMR} Algorithm for Large-Scale {T}ikhonov Regularization},
  journal   = {SIAM Journal on Scientific Computing},
  volume    = {37},
  number    = {5},
  pages     = {S562--S580},
  year      = {2015},
  doi       = {10.1137/140975024}
}

@article{survey,
journal = {SIAM Review},
pages = {205–284},
volume = {66},
number = {2},
year = {2024},
title = {Computational Methods for Large-Scale
Inverse Problems: A Survey on Hybrid
Projection Methods},
author = {J.~Chung and S.~Gazzola}
}

@article{astra,
  title={The {ASTRA} Toolbox: A platform for advanced algorithm development in electron tomography},
  author={Van Aarle, Wim and Palenstijn, Willem Jan and De Beenhouwer, Jan and Altantzis, Thomas and Bals, Sara and Batenburg, K Joost and Sijbers, Jan},
  journal={Ultramicroscopy},
  volume={157},
  pages={35--47},
  year={2015},
  publisher={Elsevier}
}

@article{gazzola2015survey,
author = "S.~Gazzola and P.~Novati and M.~R.~Russo",
title = "{O}n {K}rylov projection methods and {T}ikhonov regularization",
journal = "Electron.~Trans.~Numer.~Anal.",
volume = 44,
pages = "83--123",
year = 2015
}
